\documentclass[a4paper,oneside,10.5pt]{article}%
\usepackage{amsmath}
\usepackage{amsfonts}
\usepackage{indentfirst}
\usepackage{amssymb}
\usepackage{graphicx}%
\usepackage{appendix}
\usepackage[colorlinks,citecolor=blue,urlcolor=blue]{hyperref}
\usepackage[utf8]{inputenc}
\usepackage{float}

\setcounter{MaxMatrixCols}{30}
\providecommand{\U}[1]{\protect \rule{.1in}{.1in}}

\pagenumbering{arabic}
\setlength{\textwidth}{150mm}
\setlength{\textheight}{220mm}
\headsep=15pt \topmargin=-5mm \oddsidemargin=0.46cm
\evensidemargin=0.46cm \raggedbottom

\newtheorem{theorem}{Theorem}[section]

\newtheorem{corollary}[theorem]{Corollary}

\newtheorem{assumption}[theorem]{Assumption}
\newtheorem{example}[theorem]{Example}

\newtheorem{lemma}[theorem]{Lemma}

\newtheorem{remark}[theorem]{Remark}

\newenvironment{proof}[1][Proof]{\noindent \textbf{#1.} }{\  \rule{0.5em}{0.5em}}
\numberwithin{equation}{section}

\begin{document}

\title{Minimum-Time Stochastic Optimal Control Problems Under Mean Constraints and Application to Portfolio Investment}
\author{Shuzhen Yang\thanks{Shandong University-Zhong Tai Securities Institute for Financial Studies, Shandong University, PR China, (yangsz@sdu.edu.cn).}}
\date{}
\maketitle

\textbf{Abstract}: Motivated by the practical demand for minimum-time optimal investment problems, we develop a unified framework for mean constraints minimum-time stochastic optimal control problems. In this setting, the minimum-time criterion is defined as the expected earliest time to reach a target state, making the terminal time dependent on the control. The main contributions of this work are twofold: first, we derive an extended stochastic maximum principle for the proposed model and further prove the existence of an optimal control for the linear systems; second, we establish a bang-bang type optimal control for the linear time-optimal control problem. In the end, we solve a financial portfolio optimization problem within the proposed framework.

\textbf{Keywords}: minimum-time; portfolio investment; stochastic maximum principle

{\textbf{MSC2010}: 93E03; 93E20; 60G99

\addcontentsline{toc}{section}{\hspace*{1.8em}Abstract}

\section{Introduction}

Dynamic portfolio optimization with uncertain exit time is an important research topic in mathematical finance and stochastic control theory. In financial markets, the exit time of investment activities is not fixed or purely exogenous, but is closely dependent on the real-time state and cumulative performance of investment portfolios. Existing literature has extensively explored portfolio optimization problems with stochastic exit times. Specifically, \cite{CNM08} investigated optimal investment strategies under constant relative risk aversion (CRRA) utility preferences, where the uncertain exit time is characterized by a given probability density function. The study established sufficient optimality conditions and derived explicit analytical solutions for optimal investment strategies. Nevertheless, such horizon frameworks cannot reflect the linkage between portfolio performance and investment termination decisions.

A series of studies have supplemented the research on portfolio optimization with state-dependent exit times. \cite{MU05} constructed a static mean-variance portfolio framework with return-dependent exit times, while \cite{H08} proposed a robust portfolio selection strategy based on the worst-case conditional value-at-risk (CVaR) risk measure for uncertain horizon scenarios. Despite these advances, most existing studies focus on static portfolio decisions or stochastic horizon settings, lacking a unified dynamic stochastic control framework for performance-triggered stopping problems.

Classical stochastic optimal control theory typically considers fixed terminal time settings. For a given fixed terminal time \(T\), the standard cost functional consists of a running cost term defined on the time interval \([0,T]\) and a terminal cost term at the maturity moment, formulated as follows:
\begin{equation}
\label{incos-1}
J(u(\cdot))=
\mathbb{E}\bigg{[}\displaystyle\int_0^Tf(X^u(t),u(t))\mathrm{d}t+\Psi(X^u(T))\bigg{]},
\end{equation}
where the state process \(X^u(\cdot)\) is governed by a controlled stochastic differential equation (SDE). This fixed terminal time control structure has been widely adopted to model traditional portfolio optimization problems. For fundamental theories and applications of stochastic optimal control in mathematical finance, readers may refer to the monographs \cite{FM06,Y99}. For classic stochastic maximum principle theories, \cite{B78,B81} established local maximum principles for stochastic control systems, while \cite{P90} further developed the global maximum principle for general control domains.

Subsequent research has continuously expanded the boundary of stochastic optimal control theory. \cite{17H,HJX18,W13,Y10} investigated optimal control problems for recursive utility systems and fully coupled forward-backward stochastic differential systems. \cite{16HJ} extended the stochastic maximum principle to recursive optimal control systems with volatility ambiguity. In terms of infinite-dimensional stochastic control systems, \cite{LZ14} studied stochastic maximum principles for backward stochastic evolution equations, and \cite{QT12} explored optimality conditions for quasi-linear backward stochastic partial differential equations. Additionally, \cite{F10,R17,BE10} examined optimal control problems with state constraints, and \cite{Y18} established necessary and sufficient conditions for stochastic systems with multi-time state cost functionals. For mean-field stochastic control systems, \cite{AD11,Li12} proposed local maximum principles, and \cite{BDL11,BLM16} further generalized the global maximum principle for general mean-field-type dynamic systems.

Time-optimal control theory constitutes another critical research branch closely related to this study. In deterministic control systems, time-optimal control aims to determine the minimum time for the state trajectory to reach a preset target domain, where the terminal stopping time is determined by the control strategy. When the running cost is normalized to 1 and the terminal cost is zero, the classical time-optimal control problem can be equivalently formulated as finding the minimum time to reach the target constraint. The standard mathematical formulation is given by
$$
\tau^u=\inf\bigg{\{}t:Q(X^u(t))\leq 0,\ t\in [0,T] \bigg{\}}\wedge T.
$$
The theoretical foundation of deterministic time-optimal control was laid by \cite{P62}, who established the fundamental maximum principle for time-optimal control problems. Comprehensive introductions to classical optimal control theory can be found in \cite{K04}, while \cite{L94} extended time-optimal control analysis to infinite-dimensional systems. \cite{E21} further developed adjoint solution methods for numerical analysis of time-optimal control problems.

For stochastic time-optimal control with varying terminal times, existing research remains relatively limited. \cite{Y20} first proposed a stochastic optimal control framework with minimum-time mean constraints, defining the varying terminal time as
$$
\tau^u=\inf\bigg{\{}t:\mathbb{E}[Q(X^u(t))]\leq 0,\ t\in [0,T] \bigg{\}}\wedge T,
$$
and derived the corresponding local stochastic maximum principle. On this basis, \cite{SY25} extended the result to non-convex control domains and established the global maximum principle for varying-terminal-time systems. \cite{WY24} further explored recursive stochastic optimal control with minimum-time. In financial applications, \cite{Y22} constructed a mean-variance portfolio model with varying terminal times and provided optimal strategies and numerical simulation results.

However, the existing mean-constrained time-optimal frameworks only rely on the first-order moment of the state process to define stopping rules, failing to incorporate higher-order statistical characteristics and cumulative state information, which limits their applicability to complex financial investment scenarios.
To address the above research gaps, we consider to introduce a general process  $Y^u(\cdot)$ satisfying a mean-field type stochastic differential equation in this study, for $t\in [0,T]$, with the initial condition  $Y^u(0)=y_0>0$,
\begin{equation}
\label{insde-2}
\begin{array}
[c]{rl}
\text{d}Y^u(t)=h(\mathbb{E}[X^u(t)],X^u(t),\mathbb{E}[b(X^u(t),u(t))],u(t))\mathrm{d}t
+g(\mathbb{E}[X^u(t)],X^u(t),u(t))\mathrm{d}W(t).
\end{array}
\end{equation}
In this present paper, we develop a stochastic optimal control structure subject to a minimum time constraint. The corresponding cost functional is given by:
\begin{equation}
\label{incos-2}
J(u(\cdot))=\mathbb{E}\bigg{[}\displaystyle\int_0^{\tau^u}f(X^u(t),u(t))\mathrm{d}t
+\Psi(X^u(\tau^u))\bigg{]},
\end{equation}
with the minimum time constraint
\begin{equation}
\label{intime-2}
\tau^u=\inf\bigg{\{}t:\mathbb{E}[Y^u(t)]\leq 0,\ t\in [0,T] \bigg{\}}\wedge T.
\end{equation}
This formulation includes state constraint functions such as $Q(X^u(t),\mathbb{E}[X^u(t)])$ as a particular case of $\mathbb{E}[Y^u(t)]$, $t\in [0,T]$ (For further details, refer to Example \ref{exm:cases} in Subsection \ref{subsec:time-cases}). Given a control $u(\cdot)$, $Y^u(\cdot)$ can be viewed as an observable target. Notice that in (\ref{intime-2}), we define the minimum time $\tau^u$ such that
$\mathbb{E}[Y^u(\tau^u)]\leq 0,\ \tau^u\in [0,T]$ under control $u(\cdot)$. When the set
$\bigg{\{}t:\mathbb{E}[Y^u(t)]\leq 0,\ t\in [0,T] \bigg{\}}$ is nonempty, the constraint condition is equivalent to requiring
$$
\mathbb{E}[Y^u(t)]< 0, \ t\in [0,\tau^{u}), \ \mathbb{E}[Y^u(\tau^{u})]= 0.
$$

This unified stochastic optimal control framework offers significant practical utility across multiple domains where decision-making must balance a primary cost objective with a time-sensitive. For instance, in a financial market, \( X^u(\cdot) \) could represent the value of a risky asset, while \( Y^u(\cdot) \) models a key market state variable. An investment strategy would then terminate at the earliest time \( \tau^u \) that the expected state \(\mathbb{E}[Y^u(\cdot)]\) reaches a predefined threshold, thereby integrating return objectives with timing and risk constraints (See Section \ref{sec:app} for further details).
We derive necessary conditions for an optimal control $\bar{u}(\cdot)$ and its corresponding terminal time $\tau^{\bar{u}}$ by analyzing the minimum properties of both $\tau^{\bar{u}}$ and the cost functional $J(\bar{u}(\cdot))$. The associated adjoint equations are introduced, and a unified maximum principle is established. Furthermore, we prove the existence of an optimal control for linear systems, and obtain a bang-bang type optimal control for the related time-optimal control problem.

Our optimal control structure under minimum time constraint offers several advantages.

(i). We introduce two states, $X^u(\cdot)$ and $Y^u(\cdot)$, in this general framework: $X^u(\cdot)$ denotes the value of the {control state}, while $Y^u(\cdot)$ describes the value of the {target}.
We consider two distinct objectives for the states $X^u(\cdot)$ and $Y^u(\cdot)$ within this general framework: first, minimize the time to achieve the target for the average state; second, minimize the cost functional $J(u(\cdot))$.

(ii). The proposed optimal control structure, first considered in a stochastic setting, presents a novel approach even for deterministic systems under minimum-time constraints. Not only does our model consider both time and cost minimization, but it also provides a systematic framework for minimizing the cost functional while explicitly adhering to the goal of reducing time.

(iii). This general minimum-time optimal control structure is applicable to portfolio investment problems where the exit time is uncertain and determined by the expected value of portfolio characteristic. A detailed solution of this class of examples is presented in Section \ref{sec:app}.

The remainder of this paper is organized as follows. Section \ref{sec:form} formulates the generalized mean-constrained minimum-time stochastic optimal control problem and introduces basic model notations and standing assumptions. Section \ref{sec:SMP} establishes the stochastic maximum principle for the proposed unified control framework. Section \ref{sec:existence} proves the existence of optimal controls for linear stochastic systems. Section \ref{sec:time} analyzes the bang-bang property of optimal controls for linear time-optimal control problems. Section \ref{sec:app} applies the theoretical results to portfolio optimization problems and presents numerical examples to verify the effectiveness of the proposed framework. Section \ref{sec:con} concludes the paper and discusses future research directions.

\section{Formulation}\label{sec:form}

Let $(\Omega,\mathcal{F},P;\{ \mathcal{F}(t)\}_{t\geq 0})$ be a complete filtered probability space, and  $W$ be a $d$-dimensional standard Brownian motion, where $\{ \mathcal{F}(t)\}_{t\geq0}$ is the natural filtration generated by $W$ under the $P$-augmentation. Given the terminal time $T>0$, consider the following controlled stochastic differential equation:
\begin{equation}\label{ode_1}
\left\{
\begin{aligned}
&\text{d}{X}^u(t)=b(X^u{(t)},u(t))\text{d}t+\sigma (X^u{(t)},u(t))\text{d}W(t) ,\quad t\in(0,T],\\
&X(0)=x_0,
\end{aligned}
\right.
\end{equation}
where
$$
u(\cdot)\in \mathcal{U}[0,T]:=\left\{\phi(\cdot):\ \mathbb{E}\big{[}\int_0^{T}\left|\phi(t) \right|^2\mathrm{d}t \big{]}<+\infty,\ \phi(\cdot)\in\{\mathcal{F}_t\}_{t\geq 0}\right\}
$$
and $U$ is a convex subset of $\mathbb{R}^k$ for a given positive integer $k$.

In this study, we consider the cost functional
\begin{equation}
J(u(\cdot))=%
\mathbb{E}\bigg{[}{\displaystyle \int \limits_{0}^{\tau^{u}}}
f(X^u{(t)},u(t))\text{d}t+\Psi(X^u(\tau^u))\bigg{]},\label{cost-1}%
\end{equation}
subject to the minimum time constraint
\begin{equation}
\label{time-1}
\tau^u=\inf\bigg{\{}t:\mathbb{E}[Y^u(t)]\leq 0,\ t\in [0,T] \bigg{\}}\wedge T,
\end{equation}
where $Y^u(t)$ satisfies
\begin{equation}\label{ode_2}
\left\{
\begin{aligned}
&\text{d}{Y}^u(t)=h(\mathbb{E}[X^u(t)],X^u(t),\mathbb{E}[b(X^u(t),u(t))],u(t))\text{d}t
+g(\mathbb{E}[X^u(t)],X^u(t),u(t))\text{d}W(t),\\
&Y^u(0)=y_0.
\end{aligned}
\right.
\end{equation}
Note that if $y_0\leq 0$, we have $\tau^u=0$, which leads to a trivial problem. We introduce the following notations:
\[%
\begin{array}
[c]{l}%
b:\mathbb{R}^m\times U\to \mathbb{R}^{m\times 1},\\
\sigma:\mathbb{R}^{m}\times U\to \mathbb{R}^{m\times d},\\
f:\mathbb{R}^m\times U\to \mathbb{R},\\
\Psi:\mathbb{R}^{m}\to \mathbb{R},\\
h:\mathbb{R}^m\times\mathbb{R}^m\times \mathbb{R}^m\times U\to \mathbb{R},\\
g:\mathbb{R}^m\times\mathbb{R}^{m}\times U\to \mathbb{R}^{1\times d}.
\end{array}
\]
Let $\sigma=(\sigma^1,\sigma^2,\cdots,\sigma^d)$, with $\sigma^j\in \mathbb{R}^{m\times 1}$ for $j=1,2,\cdots, d$. Here, "$\top$" stands the transpose of a vector or matrix.

In the following, we show the basic assumptions through out this paper. We assume that $b,\sigma,f,h$ and $g$ are uniformly continuous and satisfy the following conditions.
\begin{assumption}
\label{ass-b}There exists a constant $c>0$ such that%
\[%
\begin{array}
[c]{c}%
\left| b(x_{1},u)-b(x_{2},u)\right| +\left| \sigma(x_{1},u)-\sigma(x_{2},u)\right|
 \leq c\left|x_1-x_2 \right|,\\
\end{array}
\]
$\forall(x_{1},u),(x_{2},u)\in{\mathbb{R}^m}\times U$.
\end{assumption}

\begin{assumption}
\label{assb-b2} There exists a constant $c>0$ such that
\[
\left|b(x,u)\right|+\left|\sigma(x,u)\right| \leq c(1+\mid x \mid),\quad \forall(x,u)\in{\mathbb{R}^m}\times U,
\]
and
\[
\left|h(x,x',x'',u)\right|+\left|g(x,x',u)\right| \leq c(1+\mid x\mid+\mid x' \mid+\mid x'' \mid),\quad \forall(x,x',x'',u)\in{\mathbb{R}^m}\times \mathbb{R}^m \times \mathbb{R}^m\times U.
\]
\end{assumption}

\begin{assumption}
\label{ass-fai}The functions $b(x,u),\sigma(x,u),f(x,u)$ are differentiable in $(x,u)$, with the derivatives of $b(x,u)$ and $\sigma(x,u)$ bounded, and the derivative of $f(x,u)$ having linear growth in $(x,u)$; The functions $h(x,x',x'',u),\ g(x,x',u)$ are differentiable in $(x,x',x'',u)$, with derivatives having linear growth in $(x,x',x'',u)$; The function $\Psi(x)$ is twice differentiable in $x$, with bounded derivatives.
\end{assumption}

Let Assumptions \ref{ass-b} and \ref{assb-b2} hold. Then there exists a unique
solution $(X^u(\cdot),Y^u(\cdot))$ to equations (\ref{ode_1}) and (\ref{ode_2}) (see \cite{LS78}). A control $\bar{u}(\cdot)\in \mathcal{U}[0,\tau^{\bar{u}}]$
satisfying
\begin{equation}
J(\bar{u}(\cdot))= \underset{u(\cdot)\in\mathcal{U}[0,\tau^u]}{\inf}J(u(\cdot)) \label{cost-2}%
\end{equation}
is called an optimal control. The corresponding state trajectory $(\bar{u}(\cdot),\bar{X}(\cdot))$ is called an optimal state trajectory or optimal pair under minimum time $\tau^{\bar{u}}$.

Under Assumptions \ref{ass-b}, \ref{assb-b2} hold, equation (\ref{ode_2}) yields
\begin{equation}
\label{mean-0}
\mathbb{E}[Y^u(s)]=y_0+\int_0^s\mathbb{E}[h(\mathbb{E}[X^u(t)],X^u(t),\mathbb{E}[b(X^u(t),u(t))],u(t))]\mathrm{d}t,\ s\in [0,T].
\end{equation}
To simplify notation, we write
$$
G^{{u}}(t):=\mathbb{E}[h(\mathbb{E}[X^u(t)],X^u(t),\mathbb{E}[b(X^u(t),u(t))],u(t))],\ t\in [0,T],
$$
so that equation (\ref{mean-0}) becomes
\begin{equation}
\label{mean-1}
\mathbb{E}[Y^u(s)]=y_0+\int_0^sG^{{u}}(t)\mathrm{d}t,\ s\in [0,T].
\end{equation}
We now introduce the following assumption on the function $G^{\bar{u}}(\cdot)$, which plays a crucial role in establishing the main results.
\begin{assumption}
\label{ass-h} Let $\tau^{\bar{u}}$ be a Lebesgue point of $G^{\bar{u}}(\cdot)$ and $G^{\bar{u}}(\tau^{\bar{u}})\neq 0$. That is, $G^{\bar{u}}(\cdot)$ is measurable at point $\tau^{\bar{u}}$.
\end{assumption}

\begin{remark}
The proof of Lemma \ref{le2-e0} requires the condition $G^{\bar{u}}(\tau^{\bar{u}})\neq 0$ in Assumption \ref{ass-h}. If $G^{\bar{u}}(\tau^{\bar{u}})=0$, counterexample can be constructed where $\lim_{\rho\to 0}\frac{\tau^{\bar{u}}-\tau^{u^{\rho}}}{\rho}$ does not exist. To address this, one may introduce an $\varepsilon$-approximate optimal control formulation, replacing $G^u(\cdot)$ with  $G^u(\cdot)+\varepsilon$ in the equation of $Y^u(\cdot)$.
\end{remark}

The primary contribution of this paper lies in deriving the stochastic maximum principle for the proposed model. The relevant proof can be found in Section \ref{sec:SMP}.
\begin{theorem}
\label{the-Max} Let Assumptions \ref{ass-b}, \ref{assb-b2}, \ref{ass-fai} and \ref{ass-h} hold,
$(\bar{u}(\cdot),\bar{X}(\cdot))$ be an optimal pair of (\ref{cost-2}). Then, there exists  $(p(\cdot),q(\cdot))$ satisfying the first-order adjoint equations (\ref{prin-1}), and the following holds:

 (i). If $\tau^{\bar{u}}<T$, then for any $u\in U$, $\text{a.e.}\ t \in[0,\tau^{\bar{u}})$, $P-\text{a.s.}$
\begin{equation}%
\begin{array}
[c]{ll}%
&H_u(\bar{X}(t),\bar{u}(t),p(t),q(t))(u-\bar{u}(t))-\displaystyle\frac{ \mathbb{E}[\tilde{\Psi}^{\bar{u}}(\tau^{\bar{u}})
+f(\bar{X}(\tau^{\bar{u}}),\bar{u}(\tau^{\bar{u}}))]\hat{K}(t)(u-\bar{u}(t))}{G^{\bar{u}}
(\tau^{\bar{u}})}\leq 0,\\
\end{array}
\label{prin-2}%
\end{equation}
where $\hat{K}(t)$ is given in Lemma \ref{le-dual} and
$$
\tilde{\Psi}^{\bar{u}}(\tau^{\bar{u}})=\Psi_x(\bar{X}(\tau^{\bar{u}}))^{\top}b(\bar{X}(\tau^{\bar{u}}),\bar{u}(\tau^{\bar{u}}))
+\frac{1}{2}\sum_{j=1}^d\sigma^j(\bar{X}(\tau^{\bar{u}}),\bar{u}(\tau^{\bar{u}}))^{\top}\Psi_{xx}(\bar{X}(\tau^{\bar{u}}))
\sigma^j(\bar{X}(\tau^{\bar{u}}),\bar{u}(\tau^{\bar{u}}));
$$

(ii). If $\bigg{\{}t:\mathbb{E}[Y^{\bar{u}}(t))]\leq 0,\ t\in [0,T] \bigg{\}}=\varnothing$, then for any $u\in U$, $\text{a.e.}\ t \in[0,\tau^{\bar{u}})$, $P-\text{a.s.}$
\begin{equation}%
\begin{array}
[c]{ll}%
&H_u(\bar{X}(t),\bar{u}(t),p(t),q(t))(u-\bar{u}(t))\leq 0;\\
\end{array}
\end{equation}

(iii). If $\inf\bigg{\{}t:\mathbb{E}[Y^{\bar{u}}(t))]\leq 0,\ t\in [0,T] \bigg{\}}=T$, then for any $u\in U$, $\text{a.e.}\ t \in[0,\tau^{\bar{u}})$, $P-\text{a.s.}$
\begin{equation}\label{equa-smp-2}
\begin{array}
[c]{ll}%
&H_u(\bar{X}(t),\bar{u}(t),p(t),q(t))(u-\bar{u}(t))-\displaystyle\frac{ \mathbb{E}[\tilde{\Psi}^{\bar{u}}(\tau^{\bar{u}})
+f(\bar{X}(\tau^{\bar{u}}),\bar{u}(\tau^{\bar{u}}))]\hat{K}(t)(u-\bar{u}(t))}{G^{\bar{u}}
(\tau^{\bar{u}})}\leq 0,\\
\end{array}
\end{equation}
or
\begin{equation}\label{equa-smp-3}
\begin{array}
[c]{ll}%
&H_u(\bar{X}(t),\bar{u}(t),p(t),q(t))(u-\bar{u}(t))\leq 0.\\
\end{array}
\end{equation}
\end{theorem}

\begin{remark}
In Theorem \ref{the-Max}, for $\tau^{\bar{u}}<T$, the necessary condition (\ref{prin-2}) consists of two parts $N_1$ and $N_2$, where
\[
\begin{cases}
N_1(t):=H_u(\bar{X}(t),\bar{u}(t),p(t),q(t))(u-\bar{u}(t)),\\
N_2(t):=\displaystyle\frac{ \mathbb{E}[\tilde{\Psi}^{\bar{u}}(\tau^{\bar{u}})
+f(\bar{X}(\tau^{\bar{u}}),\bar{u}(\tau^{\bar{u}}))]\hat{K}(t)(u-\bar{u}(t))}{G^{\bar{u}}
(\tau^{\bar{u}})}.
\end{cases}
\]
The term $N_1(\cdot)$ is used to verify the necessary condition for the optimal control in the classical stochastic optimal control problem, while the term $N_2(\cdot)$ is used to verify the necessary condition for the optimal control in the time-optimal control problem. In our unified optimal control framework, we can balance these two necessary conditions.

The application of the proposed model to portfolio investment problems is given in Subsection \ref{sub-sec:app}. A solve example is shown in Section \ref{sec:app}. Our model (\ref{cost-1})--(\ref{ode_2}) contains the standard optimal control problem as special case, deterministic time-optimal control problems, stochastic time-optimal control problems, and traditional stochastic optimal control problems with fixed terminal times. For more details,  refer to Subsections {\ref{sub-sec:deter}}, {\ref{subsec:time-cases}}, and {\ref{sub-sec:comp-stoch}}.
\end{remark}
\subsection{Application to portfolio investment problems}\label{sub-sec:app}
In financial markets, dynamic portfolio optimization under uncertain investment horizons is one of the most practically relevant and theoretically important problems. In conventional portfolio selection models, investors aim to maximize expected utility or return while the investment horizon is typically fixed or stochastic. However, real-world investment behaviors indicate that the effective exit time is strongly dependent on the running performance of the portfolio. Investors tend to terminate investment once the portfolio achieves a predefined target or fails to meet expected average performance standards. To characterize such investment mechanisms, we propose a novel class of mean-constrained minimum-time optimal stochastic control models \eqref{ode_1}--\eqref{ode_2}, which can precisely capture dynamic portfolio strategies with performance-dependent uncertain exit times.

In our model, the state process \(X^u(\cdot)\) represents the real-time wealth process of the risky portfolio, the control variable \(u(\cdot)\) corresponds to the dynamic asset allocation strategy assigned to risky assets, and the auxiliary state process $Y^u(\cdot)$ is introduced to measure the deviation of cumulative performance from the preset investment target. The uncertain investment terminal time \(\tau^u\) is defined as
\[\tau^u = \inf \left\{ t \in [0,T] : \mathbb{E}\big[Y^u(t)\big] \leq 0 \right\} \land T,\]
which implies that the investment terminates whenever the expected performance deviation becomes non-positive. This mean-dependent stopping mechanism is particularly consistent with institutional investment practices, where fund managers and institutional investors typically evaluate portfolio performance based on statistical average outcomes rather than individual sample-path fluctuations.

Existing studies have widely investigated portfolio optimization with stochastic exit times. In particular, \cite{CNM08} examined optimal investment problems under CRRA utility, where the uncertain exit time is exogenously specified via a given probability density. That work established sufficient optimality conditions and derived explicit investment strategies. However, such exogenous horizon frameworks are restrictive, since the exit time is independent of portfolio dynamics and fails to capture the endogenous linkage between investment performance and termination decisions.

In contrast, our model differs from conventional frameworks. First, the exit time \(\tau^u\) is endogenously determined by the expected portfolio performance, yielding an economically interpretable and performance-driven stopping rule. Second, we formulate a mean-constrained minimum-time stochastic control problem that jointly optimizes portfolio returns and target-reaching time, whereas most existing models only consider utility maximization under fixed or exogenous random horizons. Third, the introduced mean-field structure \(\mathbb{E}[Y^u(t)]\) enhances the robustness of optimal strategies against market randomness.

Accordingly, the proposed formulation \eqref{ode_1}--\eqref{ode_2} provides a generalized portfolio optimization framework. It unifies several classical control settings as special cases, and offers a novel mathematical paradigm for modeling practical investment problems with performance-triggered uncertain exit times. 

\subsection{Time-optimal control under the deterministic setting}\label{sub-sec:deter}

The time-optimal control problem is an interesting yet challenging area in optimal control theory. The monograph by Pontryagin et. al. \cite{P62} first introduced the time-optimal control problem in the deterministic case. We begin by reviewing the time-optimal control problem in the deterministic setting. In this subsection, let $\sigma(\cdot)\equiv0,\ f(\cdot)\equiv1,\ \Psi(\cdot)\equiv0,\ g(\cdot)\equiv0$ and $h(\cdot)$ be a function of state $X^u(\cdot)$ and control $u(\cdot)$. Then the time-optimal control problem is formulated as follows:
\begin{equation}
J(u(\cdot))= \tau^{u}, \label{time-cost-1}
\end{equation}
where $\tau^u$ satisfies
$$
X^u(\tau^{u})\in \mathcal{D}=\{x:\ \Phi(x)\leq 0 \}, \label{free-time-1}
$$
and $\Phi(\cdot)\in C^{1}(\mathbb{R}^m)$. Thus, the objective of the time-optimal control problem is to determine the minimum time required to reach a specified domain $\mathcal{D}$. Given a fixed terminal time $T$, the time-optimal control problem is equivalent to minimizing the value
\begin{equation}
\label{time-cons-1}
\tau^u=\inf\bigg{\{}t:\Phi(X^u(t))\leq 0,\ t\in [0,T] \bigg{\}}\wedge T.
\end{equation}

Let $Y^u(t)=\Phi(X^u(t))$ and note that function $\Phi(\cdot)$ is differentiable in $x$. Then $Y^u(t)$ satisfies
\begin{equation}\label{time-ode_2}
\left\{
\begin{aligned}
&\text{d}{Y}^u(t)=h(X^u(t),u(t))\text{d}t, \quad t\in(0,T],\\
&Y^u(0)=y_0,
\end{aligned}
\right.
\end{equation}
where, $y_0=\Phi(x_0)$ and $h(x,u)=\Phi_x^{\top}(x)b(x,u)$. Thus, $\tau^u$ can be defined as
\begin{equation}
\label{time-cons-2}
\tau^u=\inf\bigg{\{}t:Y^u(t)\leq 0,\ t\in [0,T] \bigg{\}}\wedge T.
\end{equation}
Based on definition of $\tau^u$ (\ref{time-cons-2}), one may consider minimizing $\tau^u$ with a general integral function $h(\cdot)$ used to define the process $Y^u(\cdot)$. It is noteworthy that the classical time-optimal control problem is a special case of our framework.

Next, we provide a more detailed exposition of our time-optimal control framework.
\begin{remark}
The state $Y^u(\cdot)$ admits the following explicit representation
$$
Y^u(t)=y_0+\int_0^th(X^u(s),u(s))\text{d}s, \quad t\in(0,T].
$$
From the formulation of $Y^u(\cdot)$, the value of $Y^u(t)$ depends on the entire trajectory of $(X^u(\cdot),u(\cdot))$ over $[0,t]$. The monograph by Pontryagin et. al. \cite{P62} pioneered the study of time-optimal control in the deterministic case, which emerges as a special case in our more general setting. In \cite{P62}, $Y^u(\cdot)$ depends only on the current state $X^u(t)$. See \cite{E21} for further detailed on numerical methods for this time-optimal control problem.

Since $Y^u(\cdot)$ is continuous on $[0,T]$, if $\tau^u<T$, we have $Y^u(\tau^u)=0$, that is
$$
0=y_0+\int_0^{\tau^u}h(X^u(s),u(s))\text{d}s.
$$
This observation is key when investigating the related properties for this optimal control structure.
\end{remark}

\subsection{Time-optimal control under the stochastic setting}\label{subsec:time-cases}
In this part, we develop the time-optimal control problem within the stochastic optimal control structure. Let $f(\cdot)\equiv1,\ \Psi(\cdot)\equiv0$. Then the time-optimal control problem is formulated as follows:
\begin{equation}
J(u(\cdot))= \tau^{u}, \label{stoc-time-cost-1}
\end{equation}
where $\tau^u$ is defined by
\begin{equation}
\label{stoc-time-cons-1}
\tau^u=\inf\bigg{\{}t:\mathbb{E}[Y^u(t)]\leq 0,\ t\in [0,T] \bigg{\}}\wedge T,
\end{equation}
with
\begin{equation}\label{stoc-ode_1}
\left\{
\begin{aligned}
&\text{d}{Y}^u(t)=h(\mathbb{E}[X^u(t)],X^u(t),\mathbb{E}[b(X^u(t),u(t))],u(t))\text{d}t
+g(\mathbb{E}[X^u(t)],X^u(t),u(t))\text{d}W(t),\\
&Y^u(0)=y_0.
\end{aligned}
\right.
\end{equation}
Here, $h(\cdot)$ and $g(\cdot)$ are integral functions of $(\mathbb{E}[X^u(t)],X^u(t),u(t))$ and $\mathbb{E}[b(X^u(t),u(t))]$.
\begin{remark}
It is important to observe that $\tau^u$ represents a deterministic time parameter rather than a stopping time. In (\ref{stoc-time-cons-1}), $\tau^u$ depends on the expectation of process $Y^u(\cdot)$ that is the solution of a mean-field type SDE (\ref{stoc-ode_1}). Many practical problems align with this model. For example, one may seek the optimal strategy and  minimum time such that the return of a portfolio exceeds a given target. \cite{Y22} solved a varying terminal time mean-variance model with a constraint on the mean value of the portfolio asset, which moves with the varying terminal time. The results of \cite{Y22} suggested that for an investment plan requires minimizing the variance with a varying terminal time.
\end{remark}

\begin{example}\label{exm:cases}
$\tau^u$ is a deterministic functional of $u(\cdot)$. Below, we present some cases of process $Y^u(\cdot)$:

$(i)$. Let $h(\cdot)=b(\cdot),\ g(\cdot)=\sigma(\cdot)$ and $y_0=x_0$. Then
$$
Y^u(t)=X^u(t),\ 0\leq t\leq T
$$
and for $\tau^u<T$, we have
$$\mathbb{E}[X^u(\tau^u)]=0,\ \mathbb{E}[X^u(t)]<0,\ t<\tau^u.$$
Thus, $\tau^u$ is the minimum time such that mean value of the controlled state $X^u(\tau^u)$ reaches zero.

$(ii)$. Let $Y^u(t)=\Phi(\mathbb{E}[X^u(t)],X^u(t)),\ t\in [0,T]$, where $\Phi(\cdot)\in C^{1,2}(\mathbb{R}^m\times \mathbb{R}^m)$. Applying It\^{o} formula to $\Phi(\mathbb{E}[X^u(t)],X^u(t))$, we have that
\begin{equation}
\left\{
\begin{aligned}
\text{d}{Y}^u(t)=&\big[\partial_x\Phi(\mathbb{E}[X^u(t)],X^u(t))^{\top}\mathbb{E}[b^u(t)]
+\partial_y\Phi(\mathbb{E}[X^u(t)],X^u(t))^{\top}b^u(t)\\
&+\frac{1}{2}\sum_{j=1}^d{\sigma^{u,j}}^{\top}(t) \partial^2_{yy}\Phi(\mathbb{E}[X^u(t)],X^u(t))\sigma^{u,j}(t) \big]\text{d}t\\
&+\sum_{j=1}^d\partial_y\Phi(\mathbb{E}[X^u(t)],X^u(t))^{\top}\sigma^{u,j}(t)\text{d}W^j(t) ,\quad t\in(0,T],\\
Y^u(0)=&\Phi(x_0,x_0),
\end{aligned}
\right.
\end{equation}
where $b^u(t):=b(t,X^u(t),u(t)),\ \sigma^u(t):=\sigma(t,X^u(t),u(t)),\ \sigma^u(t)=(\sigma^{u,1},\sigma^{u,2},\cdots,\sigma^{u,d})$. Set
\begin{equation}
\left\{
\begin{aligned}
&h(\mathbb{E}[X^u(t)],X^u(t),\mathbb{E}[b^u(t)],u(t))=\partial_x\Phi(\mathbb{E}[X^u(t)],X^u(t))\mathbb{E}[b^u(t)]
+\partial_y\Phi(\mathbb{E}[X^u(t)],X^u(t))b^u(t)\\
&\qquad\qquad\qquad\qquad\qquad\qquad\qquad\quad +\frac{1}{2}\sum_{j=1}^d{\sigma^{u,j}}^{\top}(t) \partial^2_{yy}\Phi(\mathbb{E}[X^u(t)],X^u(t))\sigma^{u,j}(t),\\
&g(\mathbb{E}[X^u(t)],X^u(t),u(t))=\sum_{j=1}^d\partial_y\Phi(\mathbb{E}[X^u(t)],X^u(t))\sigma^{u,j}(t),\\
&y_0=\Phi(x_0,x_0).
\end{aligned}
\right.
\end{equation}
Thus, the case $Y^u(t)=\Phi(\mathbb{E}[X^u(t)],X^u(t))$ is a special case of equation (\ref{stoc-ode_1}).
\end{example}

\subsection{Traditional stochastic optimal control problem}\label{sub-sec:comp-stoch}

In the classical stochastic optimal control problem, one typically considers the optimal control theory with a fixed terminal time $T$-that is, the cost functional is
\begin{equation}
J(u(\cdot))=%
\mathbb{E}\bigg{[}{\displaystyle \int \limits_{0}^{T}}
f(X^u{(t)},u(t))\text{d}t+\Psi(X^u(T))\bigg{]}. \label{comp-cost-1}%
\end{equation}

In many practical problems, it often necessary to reach a target before the fixed terminal time $T$, where the target depends on the distribution of the controlled state process $X^u(\cdot)$.
For example, one may seek an optimal investment strategy $u(\cdot)$ that maximizes the expected utility of the varying terminal time wealth $X^u(\tau^u)$, subject to a variance constraint on wealth $X^u(\tau^u)$, where $\tau^u$ is a varying terminal time changing with the control $u(\cdot)$. The proposed model guarantees time-optimal attainment of the target.
Another objective is to design a control policy $u(\cdot)$ that minimizes the expected energy consumption $\mathbb{E}\left[\int_0^{\tau^u} |u(t)|^2 dt\right]$ while satisfying the precision constraint $\mathbb{E}[X^u(t)]\leq \alpha,\ t\in [0,\tau^u]$, where $\alpha$ is a given constant. These problems fall within our unified optimal control structure:
\begin{equation}
J(u(\cdot))=%
\mathbb{E}\bigg{[}{\displaystyle \int \limits_{0}^{\tau^{u}}}
f(X^u{(t)},u(t))\text{d}t+\Psi(X^u(\tau^u))\bigg{]},\label{comp-cost-2}%
\end{equation}
with a varying terminal time according to the constraint
\begin{equation}
\label{comp-time-1}
\tau^u=\inf\bigg{\{}t:\mathbb{E}[Y^u(t)]\leq 0,\ t\in [0,T] \bigg{\}}\wedge T.
\end{equation}

\section{Stochastic maximum principle}\label{sec:SMP}

In this section, we establish the local stochastic maximum principle for the control problem (\ref{cost-1})--(\ref{ode_2}). Given an optimal pair $(\bar{u}(\cdot),\bar{X}(\cdot))$, let $0<\rho<1$, and $v(\cdot)+\bar{u}(\cdot)\in \mathcal{U}[0,T]$. Define
\[
u^{\rho}(t)=\bar{u}(t)+\rho v(t)=(1-\rho)\bar{u}(t)+\rho(v(t)+\bar{u}(t)),\ t\in [0,T]
\]
which belongs to $\mathcal{U}[0,T]$. Denote by $X^{\rho}(\cdot)$ the solution to equation (\ref{ode_1}) under control $u^{\rho}(\cdot)$, and by $Y^{\rho}(\cdot)$ the solution to equation (\ref{ode_2}) driven by both control $u^{\rho}(\cdot)$ and state trajectory $X^{\rho}(\cdot)$.

To analyze the variation of the minimum time $\tau^{\bar{u}}$, we define $\tau^{u^{\rho}}$ corresponding to control $u^{\rho}(\cdot)\in \mathcal{U}[0,T]$. We first show that $\tau^{{u}_{\rho}}$ converges to $\tau^{\bar{u}}$ as $\rho\to 0$ in Lemma \ref{le-0}, and prove that $\tau^{u}$ is differentiable and continuous at $\bar{u}(\cdot)$ in Lemma \ref{le-2}. In the end, we show the proof of Theorem \ref{the-Max}.
\begin{lemma}
\label{le-0}
Let Assumptions \ref{ass-b}, \ref{assb-b2}, \ref{ass-fai} and \ref{ass-h} hold. Then, we have
\begin{equation}
\label{le0-e0}
\lim_{\rho\to 0}\left|{\tau^{\bar{u}}-\tau^{u^{\rho}}}\right|
=0.
\end{equation}
\end{lemma}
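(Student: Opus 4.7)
The plan is to combine a standard SDE stability estimate (to get uniform convergence of $\mathbb{E}[Y^{u^\rho}(\cdot)]$ to $\mathbb{E}[Y^{\bar u}(\cdot)]$ on $[0,T]$) with a local analysis of the function $s \mapsto \mathbb{E}[Y^{\bar u}(s)]$ near $\tau^{\bar u}$ (to translate uniform closeness of the curves into closeness of their first zero).

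First, I would prove an $L^2$ stability estimate
\[
\sup_{t\in[0,T]}\mathbb{E}\big[|X^\rho(t)-\bar X(t)|^2\big] \leq C\rho^2,
\]
which follows from Assumptions \ref{ass-b}, \ref{assb-b2}, the convexity of $U$ (so that $u^\rho\in\mathcal U[0,T]$), the BDG inequality and Gr\"onwall's lemma applied to $X^\rho-\bar X$ via the linear growth/Lipschitz structure of $b$ and $\sigma$. Combining this with the linear growth of $h$ and $g$ in Assumption \ref{assb-b2}, the continuity hypothesis, and the explicit representation (\ref{mean-1}), I would next show that
\[
\sup_{s\in[0,T]}\big|\mathbb{E}[Y^{u^\rho}(s)] - \mathbb{E}[Y^{\bar u}(s)]\big|
 \leq \int_0^T \big|G^{u^\rho}(t) - G^{\bar u}(t)\big|\,\mathrm{d}t \;\longrightarrow\; 0
\]
as $\rho\to 0$, since the integrand is dominated and converges to $0$ pointwise in $t$ by the previous display (dominated convergence).

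The second and main step is to convert this uniform convergence into $\tau^{u^\rho}\to\tau^{\bar u}$. Consider first the case $\tau^{\bar u}<T$. By definition $\mathbb{E}[Y^{\bar u}(\tau^{\bar u})]=0$, and by Assumption \ref{ass-h} the representation (\ref{mean-1}) shows that $s\mapsto\mathbb{E}[Y^{\bar u}(s)]$ is differentiable at $\tau^{\bar u}$ with derivative $G^{\bar u}(\tau^{\bar u})\neq 0$; since $\mathbb{E}[Y^{\bar u}(s)]>0$ for $s<\tau^{\bar u}$, this derivative must be strictly negative. Hence for every sufficiently small $\varepsilon>0$ there exists $\delta_\varepsilon>0$ such that
\[
\mathbb{E}[Y^{\bar u}(\tau^{\bar u}-\varepsilon)] \geq \delta_\varepsilon,
\qquad
\mathbb{E}[Y^{\bar u}(\tau^{\bar u}+\varepsilon)] \leq -\delta_\varepsilon.
\]
Choosing $\rho$ small enough so that the uniform error is smaller than $\delta_\varepsilon$, the curve $s\mapsto \mathbb{E}[Y^{u^\rho}(s)]$ is still strictly positive at $\tau^{\bar u}-\varepsilon$ and strictly negative at $\tau^{\bar u}+\varepsilon$, forcing $\tau^{u^\rho}\in (\tau^{\bar u}-\varepsilon,\tau^{\bar u}+\varepsilon)$. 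This gives (\ref{le0-e0}).

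In the boundary case $\tau^{\bar u}=T$, one only needs a one-sided version of the same argument: for small $\varepsilon$, $\mathbb{E}[Y^{\bar u}(T-\varepsilon)]\geq\delta_\varepsilon>0$ by the sign of $G^{\bar u}(T)$ and strict positivity of $\mathbb{E}[Y^{\bar u}]$ on $[0,T)$ (if $\mathbb{E}[Y^{\bar u}(T)]<0$, continuity gives the analogue directly; if $\mathbb{E}[Y^{\bar u}(T)]=0$, the Lebesgue-point assumption again gives a lower bound on $[T-\varepsilon,T)$), so $\tau^{u^\rho}\geq T-\varepsilon$ for $\rho$ small, and $\tau^{u^\rho}\leq T$ by definition. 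The main obstacle is precisely this step: without Assumption \ref{ass-h} the curve $\mathbb{E}[Y^{\bar u}]$ could touch zero tangentially at $\tau^{\bar u}$, so that arbitrarily small perturbations shift the first zero by an order-one amount; the nonvanishing-derivative hypothesis is exactly what rules this out and makes the implicit function $u\mapsto\tau^u$ continuous at $\bar u$.
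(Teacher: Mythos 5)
The paper itself does not spell out a proof of Lemma \ref{le-0}; it simply refers to Lemma 2 of \cite{Y20}. Your argument is the natural one and, in outline, is exactly what such a proof must do: an $L^2$ stability estimate for $X^{\rho}-\bar X$ giving $\sup_{s\in[0,T]}\left|\mathbb{E}[Y^{u^{\rho}}(s)]-\mathbb{E}[Y^{\bar u}(s)]\right|\to 0$ via the representation (\ref{mean-1}) and dominated convergence, followed by a non-degeneracy argument at $\tau^{\bar u}$ using Assumption \ref{ass-h} (the Lebesgue-point property makes $s\mapsto\mathbb{E}[Y^{\bar u}(s)]$ differentiable at $\tau^{\bar u}$ with derivative $G^{\bar u}(\tau^{\bar u})$, which must then be strictly negative), which correctly identifies why the first crossing cannot drift to the right under small perturbations.

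There is one step stated too weakly, though it is easily repaired within your approach. Strict positivity of $\mathbb{E}[Y^{u^{\rho}}(\cdot)]$ \emph{at the single time} $\tau^{\bar u}-\varepsilon$ does not force $\tau^{u^{\rho}}>\tau^{\bar u}-\varepsilon$: since $\tau^{u^{\rho}}$ is the first time the expectation drops to $0$, the perturbed curve could in principle cross zero at some earlier $s<\tau^{\bar u}-\varepsilon$ and your pointwise bound says nothing about that. What you need, and what is in fact available, is a uniform bound: by continuity of $s\mapsto\mathbb{E}[Y^{\bar u}(s)]$, compactness of $[0,\tau^{\bar u}-\varepsilon]$, and strict positivity of $\mathbb{E}[Y^{\bar u}(s)]$ for $s<\tau^{\bar u}$, set
\begin{equation*}
\delta_\varepsilon:=\min_{s\in[0,\tau^{\bar u}-\varepsilon]}\mathbb{E}[Y^{\bar u}(s)]>0,
\end{equation*}
and then uniform convergence with error below $\delta_\varepsilon$ keeps $\mathbb{E}[Y^{u^{\rho}}(s)]>0$ on all of $[0,\tau^{\bar u}-\varepsilon]$, which is what actually yields $\tau^{u^{\rho}}>\tau^{\bar u}-\varepsilon$. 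The same correction is needed in your boundary case $\tau^{\bar u}=T$ (replace the bound at $T-\varepsilon$ by the minimum over $[0,T-\varepsilon]$); also note that the subcase $\mathbb{E}[Y^{\bar u}(T)]<0$ with $\tau^{\bar u}=T$ you mention cannot occur, since continuity and $y_0>0$ would then give an earlier crossing. With these adjustments the proof is complete and consistent with the case structure used later in Lemmas \ref{le-2} and \ref{le-3}.
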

\begin{proof}
The proof follows arguments analogous to those in Lemma 2 of \cite{Y20}.
Thus, we omit the details for brevity.
\end{proof}

Let $y^{}(\cdot)$  be the solution
of the following variational equation:%
\begin{equation}
\label{apro-1}
\begin{array}
[c]{rl}%
\text{d}{y}(t)= & \big{[}b_{x}(\bar{X}{(t)},\bar{u}(t))y(t)+b_{u}(\bar{X}{(t)},\bar{u}(t)) v(t)\big{]}\text{d}t\\
&+ \displaystyle \sum_{j=1}^d\big{[} \sigma_{x}^j(\bar{X}{(t)},\bar{u}(t))y(t)+\sigma^j_u(\bar{X}{(t)},\bar{u}(t)) v(t)\big{]}\text{d}W^j(t), \\
y(0)= & 0,\quad t\in (0,T].
\end{array}
\end{equation}
The following lemma is classical, and we omit the proof. See \cite{B81} and \cite{B78} for further details.
\begin{lemma}
\label{le-1} Let Assumptions \ref{ass-b}, \ref{assb-b2} and \ref{ass-fai} hold. We have
\begin{equation}%
\begin{array}
[l]{l}%
\displaystyle\lim_{\rho\to 0}\displaystyle\sup_{t\in \lbrack0,T]}\mathbb{E}\left|\rho^{-1} (X^{\rho}(t)-\bar{X}(t))-y(t)\right| =0.  \\
\end{array}
 \label{var-1}%
\end{equation}
\end{lemma}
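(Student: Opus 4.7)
The plan is to run the standard variational estimate for controlled SDEs. I would first secure the a priori bound $\sup_{t\in[0,T]}\mathbb{E}|X^{\rho}(t)-\bar{X}(t)|^{2}\leq C\rho^{2}$ by applying It\^o's formula to $|X^{\rho}-\bar{X}|^{2}$, using the Lipschitz Assumption \ref{ass-b} on $b,\sigma$ together with $u^{\rho}-\bar{u}=\rho v$ and $v(\cdot)\in L^{2}_{\mathcal{F}}(0,T;\mathbb{R}^{k})$, and closing with Gronwall. This bound, which the BDG inequality upgrades to $\mathbb{E}\sup_{t}|X^{\rho}(t)-\bar{X}(t)|^{2}\leq C\rho^{2}$, is the workhorse for everything that follows.

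Next I set $\xi^{\rho}(t):=\rho^{-1}(X^{\rho}(t)-\bar{X}(t))-y(t)$ and derive the SDE it satisfies by subtracting (\ref{apro-1}) from the equation for $\rho^{-1}(X^{\rho}-\bar{X})$. For the drift I use the first-order Taylor expansion with integral remainder
\begin{equation*}
b(X^{\rho},u^{\rho})-b(\bar{X},\bar{u})=A^{b,\rho}(t)(X^{\rho}-\bar{X})+B^{b,\rho}(t)\rho v,
\end{equation*}
where $A^{b,\rho}(t):=\int_{0}^{1}b_{x}(\bar{X}+\lambda(X^{\rho}-\bar{X}),\bar{u}+\lambda\rho v)\,\mathrm{d}\lambda$ and similarly $B^{b,\rho}(t)$ is the $\int_{0}^{1}b_{u}$-average; dividing by $\rho$ I obtain
\begin{equation*}
\tfrac{1}{\rho}\bigl(b(X^{\rho},u^{\rho})-b(\bar{X},\bar{u})\bigr)=b_{x}(\bar{X},\bar{u})\,\tfrac{X^{\rho}-\bar{X}}{\rho}+b_{u}(\bar{X},\bar{u})v+r^{b,\rho}(t),
\end{equation*}
where $r^{b,\rho}(t):=[A^{b,\rho}(t)-b_{x}(\bar{X},\bar{u})]\tfrac{X^{\rho}-\bar{X}}{\rho}+[B^{b,\rho}(t)-b_{u}(\bar{X},\bar{u})]v$. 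An identical decomposition handles each $\sigma^{j}$, producing analogous remainders $r^{\sigma^{j},\rho}(t)$. Plugging these into the equation for $\xi^{\rho}$ gives a linear SDE of the form $\mathrm{d}\xi^{\rho}(t)=[b_{x}(\bar{X},\bar{u})\xi^{\rho}+r^{b,\rho}]\mathrm{d}t+\sum_{j}[\sigma^{j}_{x}(\bar{X},\bar{u})\xi^{\rho}+r^{\sigma^{j},\rho}]\mathrm{d}W^{j}$ with $\xi^{\rho}(0)=0$ and coefficients bounded by Assumption \ref{ass-fai}.

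The main obstacle, and the crux of the proof, is showing $\int_{0}^{T}\mathbb{E}|r^{b,\rho}(t)|^{2}\mathrm{d}t+\sum_{j}\int_{0}^{T}\mathbb{E}|r^{\sigma^{j},\rho}(t)|^{2}\mathrm{d}t\to 0$ as $\rho\to 0$. I would treat the $(b_{x})$-part by Cauchy--Schwarz: the factor $|A^{b,\rho}(t)-b_{x}(\bar{X},\bar{u})|$ is bounded (derivatives of $b$ are bounded by Assumption \ref{ass-fai}) and tends to zero in probability pointwise because $X^{\rho}\to\bar{X}$ in $L^{2}$ along any subsequence a.s.\ (after extraction) and $b_{x}$ is continuous; combined with $\mathbb{E}|(X^{\rho}-\bar{X})/\rho|^{2}\leq C$ from step~1, dominated convergence gives convergence to $0$. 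The $(b_{u})$-part does not have the $\rho^{-1}$ blowup and follows directly from continuity of $b_{u}$ and $\mathbb{E}|v|^{2}<\infty$. For $\sigma^{j}$, the linear growth of its derivatives (and the resulting quadratic majorant bounded by $C(1+|\bar{X}|^{2}+|X^{\rho}|^{2}+|v|^{2})$) is integrable uniformly in $\rho$, again justifying dominated convergence.

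Finally, applying It\^o's formula to $|\xi^{\rho}(t)|^{2}$, using the Lipschitz bounds on $b_{x}(\bar{X},\bar{u}),\sigma^{j}_{x}(\bar{X},\bar{u})$, and absorbing cross terms by Cauchy--Schwarz and the elementary inequality $2ab\leq a^{2}+b^{2}$, I obtain
\begin{equation*}
\mathbb{E}|\xi^{\rho}(t)|^{2}\leq C\int_{0}^{t}\mathbb{E}|\xi^{\rho}(s)|^{2}\mathrm{d}s+\varepsilon(\rho),\qquad \varepsilon(\rho)\to 0.
\end{equation*}
Gronwall then yields $\sup_{t\in[0,T]}\mathbb{E}|\xi^{\rho}(t)|^{2}\to 0$, and the $L^{1}$ conclusion (\ref{var-1}) follows from $\mathbb{E}|\xi^{\rho}(t)|\leq(\mathbb{E}|\xi^{\rho}(t)|^{2})^{1/2}$.
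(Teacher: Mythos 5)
The paper gives no proof of Lemma \ref{le-1} at all (it declares the result classical and points to \cite{B81} and \cite{B78}), and your proposal reconstructs exactly that classical variational argument: the $O(\rho^{2})$ a priori estimate, the Taylor expansion with integral remainder, and the Gronwall estimate for $\xi^{\rho}$. So in outline you are doing what the paper intends, and the conclusion is correct.

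One step, however, is not justified as written: the convergence of the remainder $r^{b,\rho}$ (and its $\sigma^{j}$ analogues). You bound $A^{b,\rho}(t)-b_{x}(\bar{X}(t),\bar{u}(t))$ by a constant, note that it tends to zero in probability, recall $\mathbb{E}\,|\rho^{-1}(X^{\rho}(t)-\bar{X}(t))|^{2}\leq C$, and then invoke dominated convergence. Dominated convergence does not apply to a product in which \emph{both} factors move with $\rho$: boundedness of the family $\rho^{-1}(X^{\rho}-\bar{X})$ in $L^{2}$ does not give uniform integrability of its squares, and a bounded factor tending to zero in probability times an $L^{2}$-bounded family need not tend to zero in $L^{2}$ (the mass of the second factor can concentrate precisely on the small sets where the first factor stays large). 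Two standard repairs: either split $\rho^{-1}(X^{\rho}-\bar{X})=\xi^{\rho}+y$, so that the vanishing factor multiplies the \emph{fixed} function $y\in L^{2}$ (where dominated convergence is legitimate), while the term $[A^{b,\rho}-b_{x}(\bar{X},\bar{u})]\xi^{\rho}$ is simply bounded by $C|\xi^{\rho}|$ and absorbed into the Gronwall term; or upgrade your first step to a fourth-moment estimate $\sup_{t}\mathbb{E}\,|X^{\rho}(t)-\bar{X}(t)|^{4}\leq C\rho^{4}$ (same Gronwall argument), which makes $\{|\rho^{-1}(X^{\rho}-\bar{X})|^{2}\}$ uniformly integrable and legitimizes the passage to the limit. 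With either patch the remainder of your argument (It\^{o} on $|\xi^{\rho}|^{2}$, Gronwall, then Cauchy--Schwarz to get the $L^{1}$ statement (\ref{var-1})) goes through. Two minor attributions: the $u$-Lipschitz bound you use in step 1 comes from the bounded $u$-derivatives in Assumption \ref{ass-fai}, not from Assumption \ref{ass-b} (which is Lipschitz in $x$ only), and Assumption \ref{ass-fai} actually gives \emph{bounded} derivatives of $\sigma$, so your linear-growth domination for the $\sigma^{j}$ remainders is more than is needed.
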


\begin{lemma}
\label{le-diff} Let Assumptions \ref{ass-b}, \ref{assb-b2} and \ref{ass-fai} hold. Thus, we have
\begin{equation}\label{equ-diff}
\begin{array}
[c]{rl}%
\displaystyle \lim_{\rho\to 0}\frac{G^{{u}^{\rho}}(t)-G^{\bar{u}}(t)}{\rho}=\bar{h}(t,v(t)),
\end{array}
\end{equation}
where
$$
\bar{h}(t,v(t))=\mathbb{E}\big[\bar{h}_{x_1}(t)^{\top}\mathbb{E}[y(t)]
+\bar{h}_{x_2}(t)^{\top}y(t)
+\bar{h}_{x_3}(t)^{\top}\frac{\mathrm{d}\mathbb{E}[y(t)]}{\mathrm{d}t}
+\bar{h}_{x_4}(t)^{\top}v(t)\big],
$$
$\bar{h}_{x_i}(t)$ denotes the partial derivative of $\bar{h}(t)$ with respect to the $i$-th variable, and
$$
\bar{h}(t)=h(\mathbb{E}[\bar{X}(t)],\bar{X}(t),\mathbb{E}[b(\bar{X}(t),\bar{u}(t))],\bar{u}(t)).
$$
\end{lemma}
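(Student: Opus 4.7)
The plan is to perform a first-order Taylor expansion of $h$ in all four of its arguments and then identify each contribution using Lemma \ref{le-1} together with the variational equation \eqref{apro-1}. Denote the four arguments compactly by $A^{\rho}_1(t)=\mathbb{E}[X^{\rho}(t)]$, $A^{\rho}_2(t)=X^{\rho}(t)$, $A^{\rho}_3(t)=\mathbb{E}[b(X^{\rho}(t),u^{\rho}(t))]$, and $A^{\rho}_4(t)=u^{\rho}(t)$, writing $\bar{A}_i$ for their values at $\bar{u}(\cdot)$. By Assumption \ref{ass-fai}, $h$ is $C^{1}$ with derivatives of linear growth, so the fundamental theorem of calculus yields
\begin{equation*}
h(A^{\rho})-h(\bar{A}) = \int_{0}^{1}\sum_{i=1}^{4} h_{x_i}\bigl(\bar{A}+\lambda(A^{\rho}-\bar{A})\bigr)^{\top}\bigl(A^{\rho}_i-\bar{A}_i\bigr)\, d\lambda.
\end{equation*}
Dividing by $\rho$, taking the outer expectation, and passing $\rho\to 0$ reduces the lemma to identifying $\lim_{\rho\to 0}\rho^{-1}(A^{\rho}_i-\bar{A}_i)$ for each $i$, plus a dominated-convergence argument inside $\int_{0}^{1}\mathbb{E}[\,\cdot\,]\,d\lambda$.

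\textbf{Identification of the four limits.} For $i=2$ and $i=4$ the limits are immediate: Lemma \ref{le-1} gives $\rho^{-1}(X^{\rho}(t)-\bar{X}(t))\to y(t)$ in $L^{1}(\Omega)$, and $\rho^{-1}(u^{\rho}(t)-\bar{u}(t))=v(t)$ exactly. For $i=1$ one takes expectations in this $L^{1}$-convergence to obtain $\rho^{-1}(\mathbb{E}[X^{\rho}(t)]-\mathbb{E}[\bar{X}(t)])\to \mathbb{E}[y(t)]$. The decisive term is $i=3$: I would Taylor-expand $b$ at $(\bar{X}(t),\bar{u}(t))$, exploit the boundedness of $b_x$ and $b_u$ from Assumption \ref{ass-fai} together with Lemma \ref{le-1} to pass the limit inside the outer expectation, and conclude
\begin{equation*}
\rho^{-1}\bigl(\mathbb{E}[b(X^{\rho}(t),u^{\rho}(t))]-\mathbb{E}[b(\bar{X}(t),\bar{u}(t))]\bigr)\;\longrightarrow\; \mathbb{E}\bigl[b_{x}(\bar{X}(t),\bar{u}(t))y(t)+b_{u}(\bar{X}(t),\bar{u}(t))v(t)\bigr].
\end{equation*}
The right-hand side is precisely $\tfrac{d}{dt}\mathbb{E}[y(t)]$: take expectations in \eqref{apro-1}, use that the stochastic integral has zero mean to get $\mathbb{E}[y(t)]=\int_{0}^{t}\mathbb{E}[b_{x}y+b_{u}v]\,ds$, and differentiate in $t$.

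\textbf{Main obstacle.} The principal technical difficulty is justifying the interchange of $\lim_{\rho\to 0}$ with $\mathbb{E}$ and with $\int_{0}^{1}\,d\lambda$ in the Taylor-remainder representation, because $h_{x_i}$ is only of linear growth in the state variables. To handle this I would first use Assumptions \ref{ass-b} and \ref{assb-b2} together with standard Gr\"onwall-based SDE estimates to secure uniform $L^{2}$-bounds on $X^{\rho}(t)$, $\rho^{-1}(X^{\rho}(t)-\bar{X}(t))$, and $y(t)$, uniformly in $(\rho,t)\in (0,1)\times[0,T]$. A Cauchy--Schwarz bound on $\mathbb{E}\bigl[\,|h_{x_i}(\bar{A}+\lambda(A^{\rho}-\bar{A}))|\cdot|\rho^{-1}(A^{\rho}_i-\bar{A}_i)|\,\bigr]$ then provides an integrable dominating function on $(0,1)\times\Omega$, to which dominated convergence applies using the $L^{1}$-convergence from Lemma \ref{le-1} to pass the limit pointwise. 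Summing the four limiting contributions yields exactly the claimed formula for $\bar{h}(t,v(t))$.
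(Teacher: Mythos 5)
Your proposal is correct and follows essentially the same route as the paper: a first-order Taylor expansion of $h$ in its four arguments, identification of the difference quotients via Lemma \ref{le-1} (with the $\mathbb{E}[b]$-term recognized as $\tfrac{\mathrm{d}}{\mathrm{d}t}\mathbb{E}[y(t)]$ by taking expectations in the variational equation \eqref{apro-1}). Your treatment of the remainder and the interchange of limit, expectation, and the $\lambda$-integral via uniform $L^{2}$ bounds and dominated convergence is in fact more detailed than the paper's argument, which simply absorbs these steps into an $o(\rho)$ term.
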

\begin{proof}
From  the definition of function $G^u(\cdot)$, we have
\begin{equation*}%
\begin{array}
[c]{rl}%
&G^{{u}^{\rho}}(t)-G^{\bar{u}}(t)\\
=&\mathbb{E}[h(\mathbb{E}[X^{\rho}(t)],X^{\rho}(t),\mathbb{E}[b(X^{\rho}(t),u^{\rho}(t))],u^{\rho}(t)) -h(\mathbb{E}[\bar{X}(t)],\bar{X}(t),\mathbb{E}[b(\bar{X}(t),\bar{u}(t))],\bar{u}(t))]\\                               =&\mathbb{E}\big[\bar{h}_{x_1}(t)^{\top}\mathbb{E}[X^{\rho}(t)-\bar{X}(t)]
+\bar{h}_{x_2}(t)^{\top}[X^{\rho}(t)-\bar{X}(t)]
+\bar{h}_{x_3}(t)^{\top}\mathbb{E}[b(X^{\rho}(t),u^{\rho}(t))-b(\bar{X}(t),\bar{u}(t))]\\
&+\bar{h}_{x_4}(t)^{\top}[u^{\rho}(t)-\bar{u}(t)]\big]+o(\rho)\\
=&\mathbb{E}\big[\bar{h}_{x_1}(t)^{\top}\mathbb{E}[X^{\rho}(t)-\bar{X}(t)]
+\bar{h}_{x_2}(t)^{\top}[X^{\rho}(t)-\bar{X}(t)]
\displaystyle+\bar{h}_{x_3}(t)^{\top}\frac{\text{d}\mathbb{E}[y(t)]}{\text{d}t}\\
&+\bar{h}_{x_4}(t)^{\top}[u^{\rho}(t)-\bar{u}(t)]\big]+o(\rho),
\end{array}
\end{equation*}
where $\bar{h}(t)=h(\mathbb{E}[\bar{X}(t)],\bar{X}(t),\mathbb{E}[b(\bar{X}(t),\bar{u}(t))],\bar{u}(t))$.
Applying Lemma \ref{le-1}, we have
\begin{equation*}%
\begin{array}
[c]{rl}%
&\displaystyle \frac{G^{{u}^{\rho}}(t)-G^{\bar{u}}(t)}{\rho}\\                              =&\mathbb{E}\big[\bar{h}_{x_1}(t)^{\top}\mathbb{E}[y(t)]
+\bar{h}_{x_2}(t)^{\top}y(t)
\displaystyle+\bar{h}_{x_3}(t)^{\top}\frac{\text{d}\mathbb{E}[y(t)]}{\text{d}t}
+\bar{h}_{x_4}(t)^{\top}v(t)\big]+o(1),\\
\end{array}
\end{equation*}
which yields Lemma \ref{le-diff}.
\end{proof}

Next, we derive the dual representation of
$$
\displaystyle \int_0^{\tau^{\bar{u}}}\bar{h}(t,v(t))\mathrm{d}t
$$
which is useful in proving the maximum principle. Define
\begin{equation*}
K(x,x',b(x,u),u,p_0,q_0)=b(x,u)^{\top}p_0+\sum_{j=1}^d\sigma^j(x,u)^{\top}q_0^j-h(x',x,b(x,u),u),
\end{equation*}
where $(x,x',u,p_0,q_0)\in \mathbb{R}^m\times \mathbb{R}^m\times U\times \mathbb{R}^m\times \mathbb{R}^{m\times d}$,
and consider the first-order adjoint equation:
\begin{equation}%
\begin{array}
[c]{rl}%
-\text{d}{p}_0(t)= & \bigg{[}b_x(\bar{X}(t),\bar{u}(t))^{\top}p_0(t)+ \displaystyle\sum_{j=1}^d\sigma_x^{j}(\bar{X}(t),\bar{u}(t))^{\top}q_0^j(t)
                   -\bar{h}_{x_2}(t)\bigg{]}\text{d}t \\
                   &-\bigg{[}\mathbb{E}[\bar{h}_{x_1}(t)]+b_x(\bar{X}(t),\bar{u}(t))^{\top}\mathbb{E}[\bar{h}_{x_3}(t)] \bigg{]}\text{d}t \\
                   &-q_0(t)\text{d}W(t),\ t\in[0,\tau^{\bar{u}}),\\
p_0(\tau^{\bar{u}})= &0.
\end{array}
\label{line-prin-1}%
\end{equation}
For notational simplicity, $\bar{h}(t)=h(\mathbb{E}[\bar{X}(t)],\bar{X}(t),\mathbb{E}[b(\bar{X}(t),\bar{u}(t))],\bar{u}(t))$, and $\bar{h}_{x_i}(t)$ denotes the partial derivative with respect to the $i$-th variable of $h(\cdot)$, $i=1,2,3,4$.

\begin{lemma}
\label{le-dual} Let Assumptions \ref{ass-b}, \ref{assb-b2} and \ref{ass-fai} hold. We have
\begin{equation}\label{equ-dual}
\begin{array}
[c]{rl}%
\displaystyle \int_0^{\tau^{\bar{u}}}\bar{h}(t,v(t))\mathrm{d}t=
\displaystyle-\mathbb{E}\int_0^{\tau^{\bar{u}}}
\big{[}\hat{K}(t)v(t)\big{]}\text{d}t,
\end{array}
\end{equation}
where
\begin{equation*}%
\begin{array}
[c]{rl}%
\hat{K}(t)=&{K}_u(\bar{X}{(t)},\mathbb{E}[\bar{X}{(t)}],\mathbb{E}[b(\bar{X}{(t)},\bar{u}(t))],\bar{u}(t),
p_0(t),q_0(t))\\
&+\bar{h}_{x_3}(t)^{\top}
\mathbb{E}[b_u(\bar{X}(t),\bar{u}(t))]-\mathbb{E}[\bar{h}_{x_3}(t)^{\top}]
b_u(\bar{X}(t),\bar{u}(t)),
\end{array}
\end{equation*}
${K}_u(\cdot)$ denotes the derivative with respect to $u$ and $\bar{h}(t,v(t))$ is defined in Lemma \ref{le-diff}.
\end{lemma}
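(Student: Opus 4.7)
The strategy is the standard duality argument: apply It\^o's formula to the scalar pairing $p_0(t)^\top y(t)$, integrate from $0$ to $\tau^{\bar{u}}$, take expectations, and exploit the boundary conditions $y(0)=0$ and $p_0(\tau^{\bar{u}})=0$ so that the endpoint contributions vanish.

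First, I would write the drift of $\mathrm{d}(p_0^\top y)$ by combining (\ref{apro-1}) for $y$ and (\ref{line-prin-1}) for $p_0$ with the It\^o cross term $\sum_j q_0^{j,\top}(\sigma_x^j y + \sigma_u^j v)\,\mathrm{d}t$. The $b_x$ and $\sigma_x^j$ contributions cancel pairwise as scalar transposes (e.g.\ $p_0^\top b_x y = y^\top b_x^\top p_0$, and similarly for each $\sigma_x^j$). Taking expectation and integrating from $0$ to $\tau^{\bar{u}}$ then yields the intermediate duality identity
\begin{equation*}
\int_0^{\tau^{\bar{u}}}\!\!\Bigl(\mathbb{E}[\bar{h}_{x_1}^\top]\mathbb{E}[y] + \mathbb{E}[\bar{h}_{x_2}^\top y] + \mathbb{E}[\bar{h}_{x_3}^\top]\mathbb{E}[b_x y]\Bigr)\mathrm{d}t = -\int_0^{\tau^{\bar{u}}}\!\!\Bigl(\mathbb{E}[p_0^\top b_u v] + \sum_j \mathbb{E}[q_0^{j,\top}\sigma_u^j v]\Bigr)\mathrm{d}t,
\end{equation*}
where the three ``adjoint drift'' contributions on the left are matched against the two ``control coefficient'' contributions on the right.

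Next, I would rewrite $\bar{h}(t,v(t))$ from Lemma \ref{le-diff} by pulling the deterministic factors $\mathbb{E}[y(t)]$ and $\tfrac{\mathrm{d}}{\mathrm{d}t}\mathbb{E}[y(t)]$ out of the outer expectation and then substituting the variational identity $\tfrac{\mathrm{d}}{\mathrm{d}t}\mathbb{E}[y(t)]=\mathbb{E}[b_xy+b_uv]$, which splits the $\bar{h}_{x_3}^\top\tfrac{\mathrm{d}\mathbb{E}[y]}{\mathrm{d}t}$ piece into $\mathbb{E}[\bar{h}_{x_3}^\top]\mathbb{E}[b_xy]$ plus $\mathbb{E}[\bar{h}_{x_3}^\top]\mathbb{E}[b_uv]$. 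Inserting the intermediate identity above to eliminate the three $y$-linear terms gives
\begin{equation*}
\int_0^{\tau^{\bar{u}}}\!\!\bar{h}(t,v(t))\,\mathrm{d}t = \int_0^{\tau^{\bar{u}}}\!\!\Bigl(-\mathbb{E}[p_0^\top b_u v] - \sum_j \mathbb{E}[q_0^{j,\top}\sigma_u^j v] + \mathbb{E}[\bar{h}_{x_3}^\top]\mathbb{E}[b_u v] + \mathbb{E}[\bar{h}_{x_4}^\top v]\Bigr)\mathrm{d}t.
\end{equation*}

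The final step is to recognize the integrand as $-\mathbb{E}[\hat{K}(t)v(t)]$. The terms $\mathbb{E}[p_0^\top b_uv]$, $\sum_j\mathbb{E}[q_0^{j,\top}\sigma_u^jv]$ and $\mathbb{E}[\bar{h}_{x_4}^\top v]$ match the components of $K_u$ evaluated at $(\bar{X},\mathbb{E}[\bar{X}],\mathbb{E}[b(\bar{X},\bar{u})],\bar{u},p_0,q_0)$ paired with $v$, while the residual $\mathbb{E}[\bar{h}_{x_3}^\top]\mathbb{E}[b_uv]$ must be reassembled into the mean-field correction $\bar{h}_{x_3}^\top\mathbb{E}[b_u]-\mathbb{E}[\bar{h}_{x_3}^\top]b_u$ appearing in $\hat{K}$, via the elementary identity $\mathbb{E}[\bar{h}_{x_3}^\top]\mathbb{E}[b_uv]=\mathbb{E}[\mathbb{E}[\bar{h}_{x_3}^\top]\,b_u v]$ (deterministic factor pulled inside) together with the symmetric companion $\mathbb{E}[\bar{h}_{x_3}^\top\mathbb{E}[b_u]v]$. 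The main obstacle I foresee is precisely this repackaging: since $\bar{h}_{x_3}$ and $b_u$ are jointly random and in general correlated, the interchange between ``random derivative paired with mean coefficient'' and ``mean derivative paired with random coefficient'' requires careful accounting to verify that the two correction terms in $\hat{K}$ combine against $v$ to reproduce $\mathbb{E}[\bar{h}_{x_3}^\top]\mathbb{E}[b_u v]$ exactly, which is the key technical content of the mean-field duality.
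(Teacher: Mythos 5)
Your strategy coincides with the paper's own proof: It\^{o}'s formula applied to $p_0(t)^\top y(t)$, the vanishing endpoints $y(0)=0$ and $p_0(\tau^{\bar u})=0$, the pairwise cancellation of the $b_x$ and $\sigma_x^j$ contributions, and the substitution $\tfrac{\mathrm{d}}{\mathrm{d}t}\mathbb{E}[y(t)]=\mathbb{E}[b_x(\bar X(t),\bar u(t))y(t)+b_u(\bar X(t),\bar u(t))v(t)]$. Your two displayed identities are exactly the paper's intermediate steps, so up to the penultimate identity your argument is correct and identical in route.

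The gap is the final identification with $-\mathbb{E}[\hat K(t)v(t)]$, which you explicitly leave unverified, and the obstacle you anticipate there stems from reading $K_u$ incorrectly. At the evaluation point written in the lemma the third slot of $K$ is occupied by $\mathbb{E}[b(\bar X(t),\bar u(t))]$, so the chain rule through that slot contributes $-\bar h_{x_3}(t)^\top\mathbb{E}[b_u(\bar X(t),\bar u(t))]$, not $-\bar h_{x_3}(t)^\top b_u(\bar X(t),\bar u(t))$; pointwise,
\begin{equation*}
K_u(\cdot)=p_0(t)^\top b_u+\sum_{j=1}^d q_0^j(t)^\top\sigma_u^j-\bar h_{x_3}(t)^\top\mathbb{E}[b_u]-\bar h_{x_4}(t)^\top .
\end{equation*}
Adding the two correction terms of $\hat K$ cancels $-\bar h_{x_3}^\top\mathbb{E}[b_u]$ and leaves $-\mathbb{E}[\bar h_{x_3}^\top]b_u$, so that $-\hat K(t)v(t)=-p_0^\top b_u v-\sum_j q_0^{j\top}\sigma_u^j v+\mathbb{E}[\bar h_{x_3}^\top]b_u v+\bar h_{x_4}^\top v$, and upon taking expectations the only manipulation needed is pulling the deterministic row vector out: $\mathbb{E}\big[\mathbb{E}[\bar h_{x_3}^\top]b_u v\big]=\mathbb{E}[\bar h_{x_3}^\top]\mathbb{E}[b_u v]$. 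This reproduces the integrand of your second display exactly; no product of the two correlated random quantities $\bar h_{x_3}$ and $b_u$ ever appears, so the delicate interchange you flag as ``the key technical content'' is not required. Conversely, if one differentiates $K$ through the random $b(x,u)$ (chain-rule factor $b_u$), the claimed identity would need $\mathbb{E}[\bar h_{x_3}^\top(b_u-\mathbb{E}[b_u])v]=0$, which fails in general; fixing the convention dictated by the stated evaluation point (consistent with the computation of $\hat K$ in Theorem \ref{the-time-1}, where $b_u=B$ is deterministic and the correction vanishes) is the missing piece, after which the last step is pure pointwise algebra and your proof closes.
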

\begin{proof}
Applying It\^{o} formula to $p_0(t)^{\top}y(t)$, we obtain
\begin{equation*}%
\begin{array}
[c]{rl}%
\displaystyle \text{d}(p_0(t)^{\top}y(t))=&\text{d}p_0(t)^{\top}y(t)+p_0(t)^{\top}\text{d}y(t)+\text{d}p_0(t)^{\top}\text{d}y(t)\\                              =&-\bigg{[}p_0(t)^{\top}b_x(\bar{X}(t),\bar{u}(t))+ \displaystyle\sum_{j=1}^dq_0^j(t)^{\top}\sigma_x^{j}(\bar{X}(t),\bar{u}(t))-\bar{h}_{x_2}(t)^{\top}\bigg{]}y(t)\text{d}t\\
&+\bigg{[}\mathbb{E}[\bar{h}_{x_1}(t)^{\top}]+\mathbb{E}[\bar{h}_{x_3}(t)^{\top}]b_x(\bar{X}(t),\bar{u}(t)) \bigg{]}y(t)\text{d}t\\
&+p_0(t)^{\top}\bigg{[}b_{x}(\bar{X}{(t)},\bar{u}(t))y(t)+b_{u}(\bar{X}{(t)},\bar{u}(t)) v(t) \bigg{]}\text{d}t\\
&\displaystyle +\sum_{j=1}^dq^j_0(t)^{\top}\big{[} \sigma_{x}^j(\bar{X}{(t)},\bar{u}(t))y(t)+\sigma^j_u(\bar{X}{(t)},\bar{u}(t)) v(t)\big{]}\text{d}t\\
&+M(t)\text{d}W(t),
\end{array}
\end{equation*}
where $M(t)$ is the martingale part. Since $y(0)=p_0(\tau^{\bar{u}})=0$, integrating on both sides from $0$ to $\tau^{\bar{u}}$ and taking expectation $\mathbb{E}[\cdot]$ yields
\begin{equation*}%
\begin{array}
[c]{rl}%
 0=&\displaystyle \mathbb{E}\int_0^{\tau^{\bar{u}}}\bigg{[}\bar{h}_{x_2}(t)^{\top}y(t)
+\mathbb{E}[\bar{h}_{x_1}(t)^{\top}]y(t)+\mathbb{E}[\bar{h}_{x_3}(t)^{\top}]b_x(\bar{X}(t),\bar{u}(t))y(t)   \bigg{]}\text{d}t\\
&+\displaystyle \mathbb{E}\int_0^{\tau^{\bar{u}}}\bigg{[}p_0(t)^{\top}b_{u}(\bar{X}{(t)},\bar{u}(t)) v(t)  +\sum_{j=1}^dq^j_0(t)^{\top}\sigma^j_u(\bar{X}{(t)},\bar{u}(t)) v(t)\bigg{]}\text{d}t.
\end{array}
\end{equation*}
Note that
$$
\bar{h}(t,v(t))=\mathbb{E}\big[\bar{h}_{x_1}(t)^{\top}\mathbb{E}[y(t)]
+\bar{h}_{x_2}(t)^{\top}y(t)
+\bar{h}_{x_3}(t)^{\top}\frac{\text{d}\mathbb{E}[y(t)]}{\text{d}t}
+\bar{h}_{x_4}(t)^{\top}v(t)\big],
$$
so it follows that
\begin{equation*}%
\begin{array}
[c]{rl}%
\displaystyle \int_0^{\tau^{\bar{u}}}\bar{h}(t,v(t))\text{d}t=&
\displaystyle -\mathbb{E}\int_0^{\tau^{\bar{u}}}\bigg{[}p_0(t)^{\top}b_{u}(\bar{X}{(t)},\bar{u}(t)) v(t)+\sum_{j=1}^dq^j_0(t)^{\top}\sigma^j_u(\bar{X}{(t)},\bar{u}(t)) v(t)\bigg{]}\text{d}t\\
&\displaystyle+\mathbb{E}\int_0^{\tau^{\bar{u}}}\bigg{[}\mathbb{E}[\bar{h}_{x_3}(t)^{\top}]
b_u(\bar{X}(t),\bar{u}(t))v(t)+\bar{h}_{x_4}(t)^{\top}v(t)\bigg{]}\text{d}t\\
=&\displaystyle-\mathbb{E}\int_0^{\tau^{\bar{u}}}
\big{[}K_u(\bar{X}{(t)},\mathbb{E}[\bar{X}{(t)}],\mathbb{E}[b(\bar{X}{(t)},\bar{u}(t))],
\bar{u}(t),p_0(t),q_0(t))v(t)\big{]}\text{d}t\\
&-\displaystyle\mathbb{E}\int_0^{\tau^{\bar{u}}}
\big{[}\bar{h}_{x_3}(t)^{\top}
\mathbb{E}[b_u(\bar{X}(t),\bar{u}(t))]-\mathbb{E}[\bar{h}_{x_3}(t)^{\top}]
b_u(\bar{X}(t),\bar{u}(t))\big{]}v(t)\text{d}t\\
=&\displaystyle-\mathbb{E}\int_0^{\tau^{\bar{u}}}
\big{[}\hat{K}(t)v(t)\big{]}\text{d}t,
\end{array}
\end{equation*}
where ${K}_u(\cdot)$ denotes the the derivative with respect to $u$. This completes the proof.
\end{proof}

The proof of the main results is based on a case by case analysis, divided as follows.
\begin{lemma}
\label{le-2}
Let Assumptions \ref{ass-b}, \ref{assb-b2}, \ref{ass-fai} and \ref{ass-h} hold.
Then, we have the following results:

 (i). If $\tau^{\bar{u}}<T$, then
\begin{equation}
\label{le2-e0}
\lim_{\rho\to 0}\frac{\tau^{\bar{u}}-\tau^{u^{\rho}}}{\rho}
=\int_0^{\tau^{\bar{u}}}\frac{ \bar{h}(t,v(t))}{G^{\bar{u}}(\tau^{\bar{u}})}\mathrm{d}t.
\end{equation}

(ii). If $\bigg{\{}t:\mathbb{E}[Y^{\bar{u}}(t)]\leq 0,\ t\in [0,T] \bigg{\}}=\varnothing$, we have
\begin{equation}
\lim_{\rho\to 0}\frac{\tau^{\bar{u}}-\tau^{u^{\rho}}}{\rho}=0.
\end{equation}

(iii). If $\inf\bigg{\{}t:\mathbb{E}[Y^{\bar{u}}(t)]\leq 0,\ t\in [0,T] \bigg{\}}=T$, then there exists a sequence $\rho_n\to 0$ as $n\to +\infty$ such that
\begin{equation}
\lim_{n\to +\infty}\frac{\tau^{\bar{u}}-\tau^{u^{\rho_n}}}{\rho_n}=
\int_0^{\tau^{\bar{u}}}\frac{ \bar{h}(t,v(t))}{G^{\bar{u}}(\tau^{\bar{u}})}\mathrm{d}t\ \  \mathrm{or}\ \  0.
\end{equation}
\end{lemma}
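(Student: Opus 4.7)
\textbf{Proof proposal for Lemma \ref{le-2}.}
The plan is to work directly with the integral identity (\ref{mean-1}), $\mathbb{E}[Y^u(s)]=y_0+\int_0^sG^u(t)\,\mathrm{d}t$, use continuity of $s\mapsto \mathbb{E}[Y^u(s)]$ to translate the stopping condition $\tau^u<T$ into the equality $\mathbb{E}[Y^u(\tau^u)]=0$, and then expand to first order in $\rho$ via Lemma \ref{le-diff}. The three cases correspond to whether the minimum-time constraint is strictly active before $T$ (case (i)), trivially inactive (case (ii)), or marginal at $T$ (case (iii)).

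For case (i), Lemma \ref{le-0} gives $\tau^{u^{\rho}}\to\tau^{\bar u}<T$, so for $\rho$ small both $\mathbb{E}[Y^{\bar u}(\tau^{\bar u})]=0$ and $\mathbb{E}[Y^{u^{\rho}}(\tau^{u^{\rho}})]=0$ hold by continuity. Subtracting the two integral identities and dividing by $\rho$ yields
\begin{equation*}
0=\int_0^{\tau^{\bar u}}\frac{G^{u^{\rho}}(t)-G^{\bar u}(t)}{\rho}\,\mathrm{d}t+\frac{1}{\rho}\int_{\tau^{\bar u}}^{\tau^{u^{\rho}}}G^{u^{\rho}}(t)\,\mathrm{d}t.
\end{equation*}
Lemma \ref{le-diff} with dominated convergence sends the first term to $\int_0^{\tau^{\bar u}}\bar h(t,v(t))\,\mathrm{d}t$. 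For the boundary term I would use the decomposition
\begin{equation*}
\frac{1}{\rho}\int_{\tau^{\bar u}}^{\tau^{u^{\rho}}}\!G^{u^{\rho}}(t)\,\mathrm{d}t=\frac{\tau^{u^{\rho}}-\tau^{\bar u}}{\rho}G^{\bar u}(\tau^{\bar u})+\frac{1}{\rho}\!\int_{\tau^{\bar u}}^{\tau^{u^{\rho}}}\!\bigl[G^{u^{\rho}}(t)-G^{\bar u}(t)\bigr]\mathrm{d}t+\frac{1}{\rho}\!\int_{\tau^{\bar u}}^{\tau^{u^{\rho}}}\!\bigl[G^{\bar u}(t)-G^{\bar u}(\tau^{\bar u})\bigr]\mathrm{d}t.
\end{equation*}
A $\rho$-Lipschitz estimate on $G^u$ (consequence of Assumption \ref{ass-fai} and Lemma \ref{le-1}) bounds the first remainder by $C|\tau^{u^{\rho}}-\tau^{\bar u}|\to 0$, while the Lebesgue-point property at $\tau^{\bar u}$ shows that the second remainder equals $o(1)\cdot(\tau^{u^{\rho}}-\tau^{\bar u})/\rho$. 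Since $G^{\bar u}(\tau^{\bar u})\neq 0$, one can solve for $(\tau^{u^{\rho}}-\tau^{\bar u})/\rho$ to obtain its boundedness and its limit, which rearranges to exactly (\ref{le2-e0}).

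Cases (ii) and (iii) reduce to this setup. In (ii), $\mathbb{E}[Y^{\bar u}(\cdot)]>0$ uniformly on $[0,T]$ by continuity and compactness; standard stability of the mean-field SDE (\ref{ode_2}) in $\rho$, combined with Lemma \ref{le-1}, gives $\sup_{t\in[0,T]}|\mathbb{E}[Y^{u^{\rho}}(t)-Y^{\bar u}(t)]|\to 0$, whence $\mathbb{E}[Y^{u^{\rho}}(\cdot)]>0$ throughout $[0,T]$ for $\rho$ small, so $\tau^{u^{\rho}}=T=\tau^{\bar u}$ and the ratio is identically zero. In (iii), $\tau^{\bar u}=T$ with $\mathbb{E}[Y^{\bar u}(T)]\leq 0$, so for each $\rho$ either $\tau^{u^{\rho}}=T$ (ratio $0$) or $\tau^{u^{\rho}}<T$, in which case the proof of case (i) applies verbatim and returns the integral formula; extracting a subsequence on which only one alternative occurs gives the stated dichotomy.

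The main obstacle is the quantitative control of $\rho^{-1}\int_{\tau^{\bar u}}^{\tau^{u^{\rho}}}G^{u^{\rho}}(t)\,\mathrm{d}t$ in case (i). At first glance one needs boundedness of $(\tau^{u^{\rho}}-\tau^{\bar u})/\rho$ in order to exploit the Lebesgue-point identity, but that boundedness is essentially what the lemma claims. The resolution is the bootstrap above, which uses both halves of Assumption \ref{ass-h} simultaneously: the Lebesgue-point property turns the second remainder into $o(1)\cdot(\tau^{u^{\rho}}-\tau^{\bar u})/\rho$, so that $G^{\bar u}(\tau^{\bar u})\neq 0$ lets us solve for the ratio, thereby obtaining its boundedness and its limit in one step.
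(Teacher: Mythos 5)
Your proposal is correct and follows essentially the same route as the paper: both arguments equate the integral identities $\mathbb{E}[Y^{\bar u}(\tau^{\bar u})]=\mathbb{E}[Y^{u^{\rho}}(\tau^{u^{\rho}})]=0$, reduce case (i) to $\int_0^{\tau^{\bar u}}\rho^{-1}[G^{\bar u}(t)-G^{u^{\rho}}(t)]\,\mathrm{d}t=\rho^{-1}\int_{\tau^{\bar u}}^{\tau^{u^{\rho}}}G^{u^{\rho}}(t)\,\mathrm{d}t$ via Lemma \ref{le-diff}, invoke the Lebesgue-point condition and $G^{\bar u}(\tau^{\bar u})\neq 0$ from Assumption \ref{ass-h}, and treat cases (ii) and (iii) by persistence of strict positivity and subsequence extraction, respectively. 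Your explicit three-term decomposition of the boundary integral and the bootstrap that yields boundedness of $(\tau^{u^{\rho}}-\tau^{\bar u})/\rho$ simply spell out the step the paper compresses into the factor $G^{\bar u}(\tau^{\bar u})+o(1)$.
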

\begin{proof}
We first consider case (i). From equation (\ref{mean-1}), we have
$$
\mathbb{E}[Y^{\bar{u}}(s)]=y_0+\int_0^sG^{\bar{u}}(t)\text{d}t,
$$
and
$$
\mathbb{E}[Y^{{u}^{\rho}}(s)]=y_0+\int_0^sG^{{u}^{\rho}}(t)\text{d}t.
$$
For sufficiently small $\rho\neq 0$, it follows that
$$
0=\mathbb{E}[Y^{\bar{u}}(\tau^{\bar{u}})]=y_0+\int_0^{\tau^{\bar{u}}}G^{\bar{u}}(t)\text{d}t,
$$
and
$$
0=\mathbb{E}[Y^{{u}^{\rho}}(\tau^{u^{\rho}})]=y_0+\int_0^{\tau^{u^{\rho}}}G^{{u}^{\rho}}(t)\text{d}t.
$$
Thus
$$
0=\int_0^{\tau^{\bar{u}}}G^{\bar{u}}(t)\text{d}t-\int_0^{\tau^{u^{\rho}}}G^{{u}^{\rho}}(t)\text{d}t,
$$
which implies
\begin{equation}\label{equ-var}
\int_0^{\tau^{\bar{u}}}[G^{\bar{u}}(t)-G^{{u}^{\rho}}(t)]\text{d}t=
\int_{\tau^{\bar{u}}}^{\tau^{u^{\rho}}}G^{{u}^{\rho}}(t)\text{d}t.
\end{equation}

By Lemma \ref{le-0} and \ref{le-diff}, equation (\ref{equ-var}) can be rewritten as
\begin{equation*}%
\begin{array}
[c]{rl}%
&\displaystyle \int_0^{\tau^{\bar{u}}}\frac{G^{\bar{u}}(t)-G^{{u}^{\rho}}(t)}{\rho}\text{d}t
=\displaystyle \frac{\tau^{u^{\rho}}-{\tau^{\bar{u}}}}{\rho}[G^{\bar{u}}(\tau^{\bar{u}})+o(1)]
\end{array}
\end{equation*}
and hence
$$
\displaystyle \lim_{\rho\to 0}\frac{{\tau^{\bar{u}}}-\tau^{u^{\rho}}}{\rho}
=\int_0^{\tau^{\bar{u}}}\frac{\bar{h}(t,v(t))}{G^{\bar{u}}(\tau^{\bar{u}})}\text{d}t.
$$

Now consider case (ii). Here, $\bigg{\{}t:\mathbb{E}[Y^{\bar{u}}(t)]\leq 0,\ t\in [0,T] \bigg{\}}=\varnothing$, so $\mathbb{E}[Y^{\bar{u}}(t))]>0,\ t\in [0,T]$. Since
$$
\mathbb{E}[Y^u(s)]=y_0+\int_0^sG^{{u}}(t)\mathrm{d}t,
$$
and
$$
G^{{u}}(t)=\mathbb{E}[h(\mathbb{E}[X^u(t)],X^u(t),\mathbb{E}[b(X^u(t),u(t))],u(t))],\ t\in [0,T],
$$
for sufficiently small $\rho\neq 0$, we have $\mathbb{E}[Y^{{u}^{\rho}}(t))]>0$, implying $\bigg{\{}t:\mathbb{E}[Y^{{u}^{\rho}}(t)]\leq 0,\ t\in [0,T] \bigg{\}}=\varnothing$ and $\tau^{u^{\rho}}=T$. Thus
$$
\lim_{\rho\to 0}\frac{\tau^{\bar{u}}-\tau^{u^{\rho}}}{\rho}=0.
$$

Finally, consider case (iii). The condition $\inf\bigg{\{}t:\mathbb{E}[Y^{\bar{u}}(t)]\leq 0,\ t\in [0,T] \bigg{\}}=T$ means that $\mathbb{E}[Y^{\bar{u}}(\tau^{\bar{u}})]=0$ and $\tau^{\bar{u}}=T$. For sufficiently small $\rho\neq 0$, either $\tau^{u^{\rho}}<T$ or $\bigg{\{}t:\mathbb{E}[Y^{{u}^{\rho}}(t)]\leq 0,\ t\in [0,T] \bigg{\}}=\varnothing$. Therefore, case (iii) combines the results of cases (i) and (ii), i.e., there exists a sequence $\rho_n\to 0$ as $n\to +\infty$ such that
\begin{equation}
\lim_{n\to +\infty}\frac{\tau^{\bar{u}}-\tau^{u^{\rho_n}}}{\rho_n}=
\int_0^{\tau^{\bar{u}}}\frac{ \bar{h}(t,v(t))}{G^{\bar{u}}(\tau^{\bar{u}})}\mathrm{d}t\ \  \mathrm{or}\ \  0.
\end{equation}
\end{proof}

We now derive the variational equation for cost functional (\ref{cost-1}) in the following lemma.
\begin{lemma}
\label{le-3} Let Assumptions \ref{ass-b}, \ref{assb-b2}, \ref{ass-fai} and \ref{ass-h} hold. Then, we have the following results.

 (i). If $\tau^{\bar{u}}<T$, then
\begin{equation}
\label{le3-e0}
\begin{array}
[c]{rl}
& \rho^{-1}\left[J(u^{\rho}(\cdot))-J(\bar{u}(\cdot))\right] \\
= &\displaystyle -\int_0^{\tau^{\bar{u}}}\frac{ \mathbb{E}[\tilde{\Psi}^{\bar{u}}(\tau^{\bar{u}})+f(\bar{X}(\tau^{\bar{u}}),\bar{u}(\tau^{\bar{u}}))]
\bar{h}(t,v(t))}{G^{\bar{u}}(\tau^{\bar{u}})}\mathrm{d}t
+\mathbb{E}\bigg{[} \Psi_x(\bar{X}(\tau^{\bar{u}}))^{\top}y(\tau^{\bar{u}})\big{)}\bigg{]}\\
&+\displaystyle \mathbb{E}\int_{0}^{\tau^{\bar{u}}}\bigg{[}
f_x(\bar{X}{(t)},\bar{u}(t))^{\top}y(t)+f_u(\bar{X}{(t)},\bar{u}(t))^{\top}v(t)  \bigg{]}\mathrm{d}t+o(1),
\end{array}
\end{equation}
where
$$
\tilde{\Psi}^{\bar{u}}(\tau^{\bar{u}})=\Psi_x(\bar{X}(\tau^{\bar{u}}))^{\top}b(\bar{X}(\tau^{\bar{u}}),\bar{u}(\tau^{\bar{u}}))
+\frac{1}{2}\sum_{j=1}^d\sigma^j(\bar{X}(\tau^{\bar{u}}),\bar{u}(\tau^{\bar{u}}))^{\top}\Psi_{xx}(\bar{X}(\tau^{\bar{u}}))
\sigma^j(\bar{X}(\tau^{\bar{u}}),\bar{u}(\tau^{\bar{u}})).
$$

(ii). If $\bigg{\{}t:\mathbb{E}[Y^{\bar{u}}(t))]\leq 0,\ t\in [0,T] \bigg{\}}=\varnothing$, then
\begin{equation}
\begin{array}
[c]{rl}
& \rho^{-1}\left[J(u^{\rho}(\cdot))-J(\bar{u}(\cdot))\right] \\
= &\mathbb{E}\bigg{[} \Psi_x(\bar{X}(\tau^{\bar{u}}))^{\top}y(\tau^{\bar{u}})\big{)}\bigg{]}
+\displaystyle\int_{0}^{\tau^{\bar{u}}}\mathbb{E}\bigg{[}
f_x(\bar{X}{(t)},\bar{u}(t))^{\top}y(t)+f_u(\bar{X}{(t)},\bar{u}(t))^{\top}v(t)  \bigg{]}\mathrm{d}t+o(1).
\end{array}
\end{equation}

(iii). If $\inf\bigg{\{}t:\mathbb{E}[Y^{\bar{u}}(t))]\leq 0,\ t\in [0,T] \bigg{\}}=T$, then
\begin{equation}\label{equa-iii-1}
\begin{array}
[c]{rl}
& \rho^{-1}\left[J(u^{\rho}(\cdot))-J(\bar{u}(\cdot))\right] \\
= &\displaystyle -\int_0^{\tau^{\bar{u}}}\frac{ \mathbb{E}[\tilde{\Psi}^{\bar{u}}(\tau^{\bar{u}})+f(\bar{X}(\tau^{\bar{u}}),\bar{u}(\tau^{\bar{u}}))]
\bar{h}(t,v(t))}{G^{\bar{u}}(\tau^{\bar{u}})}\mathrm{d}t
+\mathbb{E}\bigg{[} \Psi_x(\bar{X}(\tau^{\bar{u}}))^{\top}y(\tau^{\bar{u}})\big{)}\bigg{]}\\
&+\displaystyle\int_{0}^{\tau^{\bar{u}}}\mathbb{E}\bigg{[}
f_x(\bar{X}{(t)},\bar{u}(t))^{\top}y(t)+f_u(\bar{X}{(t)},\bar{u}(t))^{\top}v(t)  \bigg{]}\mathrm{d}t+o(1),
\end{array}
\end{equation}
or
\begin{equation}\label{equa-iii-2}
\begin{array}
[c]{rl}
& \rho^{-1}\left[J(u^{\rho}(\cdot))-J(\bar{u}(\cdot))\right] \\
= &\displaystyle \mathbb{E}\bigg{[} \Psi_x(\bar{X}(\tau^{\bar{u}}))^{\top}y(\tau^{\bar{u}})\big{)}\bigg{]}+\displaystyle\int_{0}^{\tau^{\bar{u}}}\mathbb{E}\bigg{[}
f_x(\bar{X}{(t)},\bar{u}(t))^{\top}y(t)+f_u(\bar{X}{(t)},\bar{u}(t))^{\top}v(t)  \bigg{]}\mathrm{d}t+o(1).
\end{array}
\end{equation}
\end{lemma}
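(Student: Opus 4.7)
The plan is to split the cost difference $J(u^\rho(\cdot)) - J(\bar u(\cdot))$ into a ``fixed-horizon'' variation and a ``moving-horizon'' boundary correction, and then apply Lemma \ref{le-2} to convert the boundary correction into a $v(\cdot)$-dependent functional. Concretely, by adding and subtracting $\mathbb{E}\int_0^{\tau^{\bar u}} f(X^\rho(t),u^\rho(t))\mathrm{d}t$ and $\mathbb{E}[\Psi(X^\rho(\tau^{\bar u}))]$, one writes
$$
J(u^\rho(\cdot))-J(\bar u(\cdot)) = I_1^\rho + I_2^\rho + I_3^\rho,
$$
where
$$
I_1^\rho = \mathbb{E}\int_0^{\tau^{\bar u}}[f(X^\rho(t),u^\rho(t))-f(\bar X(t),\bar u(t))]\mathrm{d}t, \qquad I_3^\rho = \mathbb{E}[\Psi(X^\rho(\tau^{\bar u}))-\Psi(\bar X(\tau^{\bar u}))],
$$
and
$$
I_2^\rho = \mathbb{E}\int_{\tau^{\bar u}}^{\tau^{u^\rho}} f(X^\rho(t),u^\rho(t))\mathrm{d}t + \mathbb{E}[\Psi(X^\rho(\tau^{u^\rho}))-\Psi(X^\rho(\tau^{\bar u}))].
$$

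The fixed-horizon pieces $I_1^\rho$ and $I_3^\rho$ are treated by standard Taylor expansions. For $I_1^\rho$, differentiability of $f$ with linear-growth derivatives (Assumption \ref{ass-fai}) and the $L^1$ convergence $\rho^{-1}(X^\rho-\bar X)\to y$ from Lemma \ref{le-1} yield
$$
\rho^{-1} I_1^\rho \longrightarrow \mathbb{E}\int_0^{\tau^{\bar u}}[f_x(\bar X,\bar u)^\top y + f_u(\bar X,\bar u)^\top v]\mathrm{d}t.
$$
For $I_3^\rho$, the same argument using the bounded derivatives of $\Psi$ gives $\rho^{-1} I_3^\rho \to \mathbb{E}[\Psi_x(\bar X(\tau^{\bar u}))^\top y(\tau^{\bar u})]$.

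The key step is the moving-horizon term $I_2^\rho$. I would apply It\^{o}'s formula to $\Psi(X^\rho(\cdot))$ between the deterministic times $\tau^{\bar u}$ and $\tau^{u^\rho}$ and take expectation (the stochastic integral is a true martingale by the boundedness of $\Psi_x$ and the linear growth of $\sigma$):
$$
\mathbb{E}[\Psi(X^\rho(\tau^{u^\rho}))-\Psi(X^\rho(\tau^{\bar u}))] = \mathbb{E}\int_{\tau^{\bar u}}^{\tau^{u^\rho}} \mathcal{L}^{u^\rho}\Psi(X^\rho(t))\mathrm{d}t,
$$
where $\mathcal{L}^u\Psi(x) = \Psi_x(x)^\top b(x,u) + \tfrac{1}{2}\sum_j \sigma^j(x,u)^\top \Psi_{xx}(x)\sigma^j(x,u)$. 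Combined with the $f$-integral, $I_2^\rho$ has integrand $f(X^\rho,u^\rho)+\mathcal{L}^{u^\rho}\Psi(X^\rho)$ over the shrinking interval $[\tau^{\bar u},\tau^{u^\rho}]$, whose length tends to zero by Lemma \ref{le-0}. Transferring the Lebesgue-point property of Assumption \ref{ass-h} together with moment control on $X^\rho-\bar X$ to this integrand at $t=\tau^{\bar u}$, I expect
$$
I_2^\rho = (\tau^{u^\rho}-\tau^{\bar u})\,\mathbb{E}[f(\bar X(\tau^{\bar u}),\bar u(\tau^{\bar u}))+\tilde\Psi^{\bar u}(\tau^{\bar u})] + o(|\tau^{u^\rho}-\tau^{\bar u}|).
$$

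Dividing by $\rho$ and invoking Lemma \ref{le-2} then closes each case: in case (i), $(\tau^{u^\rho}-\tau^{\bar u})/\rho \to -\int_0^{\tau^{\bar u}}\bar h(t,v(t))/G^{\bar u}(\tau^{\bar u})\mathrm{d}t$, yielding (\ref{le3-e0}); in case (ii), $\tau^{u^\rho}=\tau^{\bar u}=T$ for all sufficiently small $\rho$, so $I_2^\rho=0$ and only the $I_1^\rho,I_3^\rho$ contributions survive; in case (iii), each sufficiently small $\rho$ falls into one of the two preceding regimes, producing the two claimed subsequential limits. The main obstacle will be rigorously justifying the $I_2^\rho$ asymptotics, namely replacing $f(X^\rho,u^\rho)+\mathcal{L}^{u^\rho}\Psi(X^\rho)$ by its value at $(\bar X(\tau^{\bar u}),\bar u(\tau^{\bar u}))$ on the vanishing interval $[\tau^{\bar u},\tau^{u^\rho}]$, using only the single Lebesgue-point hypothesis at $\tau^{\bar u}$ together with uniform-in-$\rho$ moment bounds for $X^\rho$ from Assumptions \ref{ass-b}--\ref{assb-b2}.
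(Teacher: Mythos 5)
Your proposal is correct and follows essentially the same route as the paper: the same three-term decomposition (with the moving-horizon pieces merely regrouped), It\^{o}'s formula applied to $\Psi(X^{\rho}(\cdot))$ on $[\tau^{\bar u},\tau^{u^{\rho}}]$, Taylor expansion via Lemma \ref{le-1} for the fixed-horizon terms, and Lemma \ref{le-2} to convert $(\tau^{u^{\rho}}-\tau^{\bar u})/\rho$ in each of the three cases. The technical point you flag at the end (replacing the integrand by its value at $\tau^{\bar u}$ on the vanishing interval) is exactly the step the paper also passes over lightly, so no essential difference remains.
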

\begin{proof}
First, consider case (i). Since $\tau^{\bar{u}}<T$, for sufficiently small $\rho\neq 0$, we have $\tau^{u^{\rho}}<T$, and
\begin{equation}\label{equa-(i)}
\begin{array}
[c]{rl}
& J(u^{\rho}(\cdot))-J(\bar{u}(\cdot)) \\
= &\displaystyle \mathbb{E}\bigg{[}{ \int \limits_{0}^{\tau^{u^{\rho}}}} f(X^{u^{\rho}}{(t)},u^{\rho}(t))\text{d}t+\Psi(X^{u^{\rho}}(\tau^{u^{\rho}}))\bigg{]}
-\mathbb{E}\bigg{[}{ \int \limits_{0}^{\tau^{\bar{u}}}} f(\bar{X}{(t)},\bar{u}(t))\text{d}t+\Psi(\bar{X}(\tau^{\bar{u}}))\bigg{]}\\
:=& \displaystyle I_1+I_2+I_3,
\end{array}
\end{equation}
where
\begin{equation*}
\begin{array}
[c]{rl}
&\displaystyle I_1= \mathbb{E}\bigg{[}{ \int \limits_{0}^{\tau^{u^{\rho}}}} f(X^{u^{\rho}}{(t)},u^{\rho}(t))\text{d}t- \int \limits_{0}^{\tau^{\bar{u}}} f(\bar{X}{(t)},\bar{u}(t))\text{d}t\bigg{]},\\
&I_2=\mathbb{E}\bigg{[}\Psi(X^{u^{\rho}}(\tau^{u^{\rho}}))-\Psi(X^{u^{\rho}}(\tau^{\bar{u}}))\bigg{]},\\
&I_3=\mathbb{E}\bigg{[}\Psi(X^{u^{\rho}}(\tau^{\bar{u}}))-\Psi(\bar{X}(\tau^{\bar{u}}))\bigg{]}.
\end{array}
\end{equation*}
Applying It\^{o} formula to $\Psi(\cdot)$, we rewrite term $I_2$ as
$$
I_2=\mathbb{E}\bigg{[}{\int\limits_{\tau^{\bar{u}}}^{\tau^{u^{\rho}}}} \tilde{\Psi}^{{u}^{\rho}}(t)\text{d}t\bigg{]},
$$
where the function $\tilde{\Psi}^{{u}}(t)$ is defined by
$$
\tilde{\Psi}^{{u}}(t):=\Psi_x(X^u(t))^{\top}b({X}^{u}(t),{u}(t))
+\frac{1}{2}\sum_{j=1}^d\sigma^j(X^u(t),u(t))^{\top}\Psi_{xx}({X}^u(t))
\sigma^j({X}^u(t),{u}(t)).
$$
Thus, $I_1$ and $I_2$ can be handled similarly. Now, compute $I_1$:
\begin{equation*}
\begin{array}
[c]{rl}
I_1= &\displaystyle \mathbb{E}\bigg{[}{ \int \limits_{0}^{\tau^{u^{\rho}}}} f(X^{u^{\rho}}{(t)},u^{\rho}(t))\text{d}t- \int \limits_{0}^{\tau^{\bar{u}}} f(\bar{X}{(t)},\bar{u}(t))\text{d}t\bigg{]}\\
=&\displaystyle \mathbb{E}\bigg{[}{\int \limits_{\tau^{\bar{u}}}^{\tau^{u^{\rho}}}} f(X^{u^{\rho}}{(t)},u^{\rho}(t))\text{d}t+\int\limits_{0}^{\tau^{\bar{u}}} [f(X^{u^{\rho}}{(t)},u^{\rho}(t))-f(\bar{X}{(t)},\bar{u}(t))]\text{d}t\bigg{]}\\
=&\displaystyle -\int_0^{\tau^{\bar{u}}}\frac{\rho\mathbb{E}[f(\bar{X}(\tau^{\bar{u}}),\bar{u}(\tau^{\bar{u}}))]
\bar{h}(t,v(t))}{G^{\bar{u}}(\tau^{\bar{u}})}\mathrm{d}t\\
&\displaystyle+\rho\mathbb{E}\int_{0}^{\tau^{\bar{u}}}\bigg{[}
f_x(\bar{X}{(t)},\bar{u}(t))^{\top}y(t)+f_u(\bar{X}{(t)},\bar{u}(t))^{\top}v(t)  \bigg{]}\mathrm{d}t
+o(\rho),
\end{array}
\end{equation*}
where the third equality follows from (i) of Lemma \ref{le-2} and Lemma \ref{le-1}.

Similar to the proof in term $I_1$, we have the following results for $I_2$ and $I_3$
$$
I_2=\displaystyle -\int_0^{\tau^{\bar{u}}}\frac{ \rho\mathbb{E}[\tilde{\Psi}^{\bar{u}}(\tau^{\bar{u}})]
\bar{h}(t,v(t))}{G^{\bar{u}}(\tau^{\bar{u}})}\mathrm{d}t+o(\rho)
$$
and
$$
I_3=\rho \mathbb{E}\bigg{[} \Psi_x(\bar{X}(\tau^{\bar{u}}))^{\top}y(\tau^{\bar{u}})\big{)}\bigg{]}+o(\rho).
$$
Combining these representations of $I_1,I_2,I_3$ with equation (\ref{equa-(i)}) yields (\ref{le3-e0}).

Now consider case (ii). Since $\bigg{\{}t:\mathbb{E}[Y^{\bar{u}}(t))]\leq 0,\ t\in [0,T] \bigg{\}}=\varnothing$, for sufficiently small $\rho\neq 0$, we have
$\bigg{\{}t:\mathbb{E}[Y^{{u}^{\rho}}(t))]\leq 0,\ t\in [0,T] \bigg{\}}=\varnothing$, so $\tau^{\bar{u}}=\tau^{u^{\rho}}=T$. This reduces to the classical case, and details are omitted.

For case (iii), the condition $\inf\bigg{\{}t:\mathbb{E}[Y^{\bar{u}}(t)]\leq 0,\ t\in [0,T] \bigg{\}}=T$ implies $\mathbb{E}[Y^{\bar{u}}(\tau^{\bar{u}})]=0$ and $\tau^{\bar{u}}=T$. For sufficiently small $\rho\neq 0$, either $\tau^{u^{\rho}}<T$ or $\bigg{\{}t:\mathbb{E}[Y^{{u}^{\rho}}(t)]\leq 0,\ t\in [0,T] \bigg{\}}=\varnothing$. Thus, case (iii) combines the results of cases (i) and (ii), i.e., there exists a sequence $\rho_n\to 0$ as $n\to +\infty$ such that equation (\ref{equa-iii-1}) or (\ref{equa-iii-2}) holds.
\end{proof}

We introduce the following first-order adjoint equation:
\begin{equation}%
\begin{array}
[c]{rl}%
-\text{d}{p}(t)= & \bigg{[}b_x(\bar{X}{(t)},\bar{u}(t))^{\top}p(t)+ \displaystyle\sum_{j=1}^d\sigma_x^j(\bar{X}{(t)},\bar{u}(t))^{\top}q^j(t) \\
               &-f_x(\bar{X}{(t)},\bar{u}(t))\bigg{]}\text{d}t-q(t)\text{d}W(t),\ t\in[0,\tau^{\bar{u}}),\\
p(\tau^{\bar{u}})= &-\Psi_{x}(\bar{X}(\tau^{\bar{u}})).
\end{array}
\label{prin-1}%
\end{equation}
Equation (\ref{prin-1}) is a linear Backward stochastic differential equation, and its explicit solution can be obtained via the dual method, see Chapter 7 in \cite{Y99} for the basic theory of Backward stochastic differential equation. Define
\begin{equation*}
{H}(x,u,p,q)=b(x,u)^{\top}p+\sum_{j=1}^d\sigma^j (x,u)^{\top}q^j-f(x,u),\text{ \  \ }%
(x,u,p,q)\in \mathbb{R}^m\times U\times \mathbb{R}^m\times \mathbb{R}^{m\times d}.%
\end{equation*}

In the following, we show the proof of Theorem \ref{the-Max}.

\begin{proof} First, prove case (i). Since $\tau^{\bar{u}}<T$, for sufficiently small $\rho\neq 0$, we have $\tau^{u^{\rho}}<T$. From
\begin{equation}
J(\bar{u}(\cdot))= \underset{u(\cdot)\in\mathcal{U}[0,\tau^u]}{\inf}J(u(\cdot)),
\end{equation}
it follows that $\rho^{-1}\left[J(u^{\rho}(\cdot))-J(\bar{u}(\cdot))\right]\geq 0$. By (i) of Lemma \ref{le-3}, we have
\begin{equation}\label{equa-smp-1}
\begin{array}
[c]{rl}
0\leq &\displaystyle -\int_0^{\tau^{\bar{u}}}\frac{ \mathbb{E}[\tilde{\Psi}^{\bar{u}}(\tau^{\bar{u}})+f(\bar{X}(\tau^{\bar{u}}),\bar{u}(\tau^{\bar{u}}))]
\bar{h}(t,v(t))}{G^{\bar{u}}(\tau^{\bar{u}})}\mathrm{d}t
+\mathbb{E}\bigg{[}\Psi_x(\bar{X}(\tau^{\bar{u}}))^{\top}y(\tau^{\bar{u}})\big{)}\bigg{]}\\
&+\displaystyle \mathbb{E}\int_{0}^{\tau^{\bar{u}}}\bigg{[}
f_x(\bar{X}{(t)},\bar{u}(t))^{\top}y(t)+f_u(\bar{X}{(t)},\bar{u}(t))^{\top}v(t)  \bigg{]}\mathrm{d}t.\\
\end{array}
\end{equation}

Applying It\^{o} formula to $p(t)^{\top}y(t)$, we obtain
\begin{equation*}
\begin{array}
[c]{rl}
&\mathbb{E}\bigg{[}[p(\tau^{\bar{u}})^{\top}y(\tau^{\bar{u}})\bigg{]}\\
=&-\mathbb{E}\bigg{[}\Psi_x(\bar{X}(\tau^{\bar{u}}))^{\top}y(\tau^{\bar{u}})\big{)}\bigg{]}\\
=&\displaystyle \mathbb{E}\int_{0}^{\tau^{\bar{u}}}\bigg{[}H_u(\bar{X}(t),\bar{u}(t),p(t),q(t))v(t)
+f_x(\bar{X}{(t)},\bar{u}(t))^{\top}y(t)+f_u(\bar{X}{(t)},\bar{u}(t))^{\top}v(t)  \bigg{]}\mathrm{d}t.
\end{array}
\end{equation*}
Combining with equation (\ref{equa-smp-1}), we get
$$
\displaystyle 0\leq -\mathbb{E}\int_{0}^{\tau^{\bar{u}}}\bigg{[}H_u(\bar{X}(t),\bar{u}(t),p(t),q(t))v(t)
+ \frac{ \mathbb{E}[\tilde{\Psi}^{\bar{u}}(\tau^{\bar{u}})+f(\bar{X}(\tau^{\bar{u}}),\bar{u}(\tau^{\bar{u}}))]
\bar{h}(t,v(t))}{G^{\bar{u}}(\tau^{\bar{u}})}\bigg{]}\mathrm{d}t.
$$
Let $v(t)=u-\bar{u}(t)$, by Lemma \ref{le-dual}, we have
$$
H_u(\bar{X}(t),\bar{u}(t),p(t),q(t))(u-\bar{u}(t))-\displaystyle\frac{ \mathbb{E}[\tilde{\Psi}^{\bar{u}}(\tau^{\bar{u}})
+f(\bar{X}(\tau^{\bar{u}}),\bar{u}(\tau^{\bar{u}}))]\hat{K}(t)(u-\bar{u}(t))}{G^{\bar{u}}
(\tau^{\bar{u}})}\leq 0.
$$

Next consider case (ii). Since $\bigg{\{}t:\mathbb{E}[Y^{\bar{u}}(t))]\leq 0,\ t\in [0,T] \bigg{\}}=\varnothing$, for sufficiently small $\rho\neq 0$, we have
$\bigg{\{}t:\mathbb{E}[Y^{{u}^{\rho}}(t))]\leq 0,\ t\in [0,T] \bigg{\}}=\varnothing$, so $\tau^{\bar{u}}=\tau^{u^{\rho}}=T$. This reduces to the classical case, and details are omitted.

Finally, consider case (iii). The condition $\inf\bigg{\{}t:\mathbb{E}[Y^{\bar{u}}(t)]\leq 0,\ t\in [0,T] \bigg{\}}=T$ implies $\mathbb{E}[Y^{\bar{u}}(\tau^{\bar{u}})]=0$ and $\tau^{\bar{u}}=T$. For sufficiently small $\rho\neq 0$, either $\tau^{u^{\rho}}<T$ or $\bigg{\{}t:\mathbb{E}[Y^{{u}^{\rho}}(t)]\leq 0,\ t\in [0,T] \bigg{\}}=\varnothing$. Thus, case (iii) combines the results of cases (i) and (ii), i.e., there exists a sequence $\rho_n\to 0$ as $n\to +\infty$ such that equation (\ref{equa-smp-2}) or (\ref{equa-smp-3}) holds.
\end{proof}

\section{Existence of Optimal Controls for Linear Systems}\label{sec:existence}

In this section, we show the existence of an optimal control for the minimum-time linear stochastic optimal control problem under mean constraints. While the previous sections derived necessary conditions (via the stochastic maximum principle), the existence of an optimal pair $(\bar{u}(\cdot),\bar{X}(\cdot))$ is a prerequisite for the validity of such conditions.
The proof follows the standard stochastic optimal control existence theory for linear systems (see \cite{Y99}).
We consider  the state $X^u(\cdot)$ satisfying a linear controlled stochastic differential equation
\begin{equation}\label{e-line-ode_1}
\left\{
\begin{aligned}
&\text{d}{X}^u(t)=[AX^u(t)+Bu(t)]\text{d}t+\sum_{j=1}^d[C_jX^u(t)+D_ju(t)]\text{d}W_j(t),\ t\in(0,T],\\
&X(0)=x_0,
\end{aligned}
\right.
\end{equation}
where $A,B,C_j,D_j,\ 1\leq j\leq d$ are constant coefficient matrices. $Y^u(t)$ satisfies
\begin{equation}\label{e-line-y-ode}
\left\{
\begin{aligned}
\text{d}{Y}^u(t)=&\big{[}E^{\top}_1\mathbb{E}[X^u(t)]+E^{\top}_2X^u(t)
+E^{\top}_3\mathbb{E}[AX^u(t)+Bu(t)]+E^{\top}_4u(t)\big{]}\text{d}t\\
&+g(\mathbb{E}[X^u(t)],X^u(t),u(t))\text{d}W(t)\\
Y^u(0)=&y_0,
\end{aligned}
\right.
\end{equation}
where $E^{\top}_1,E^{\top}_2,E^{\top}_3,E^{\top}_4$ are constant vectors. The basic assumption of this section is given as follows.
\begin{assumption}\label{ass-f-pai}
$U$ is a convex and bounded subset of $\mathbb{R}^k$, and the functions $f$ and $\Psi$ are convex functions.
\end{assumption}

For each $u(\cdot)\in\mathcal{U}[0,T]$, there exists a unique strong solution $(X^u(\cdot),Y^u(\cdot))$ to the controlled SDEs (\ref{ode_1}) and (\ref{ode_2}), and the map $u(\cdot)\mapsto(X^u(\cdot),Y^u(\cdot))$ is continuous with respect to convergence in probability. The cost functional
\[
J(u(\cdot))=\mathbb{E}\left[\int_0^{\tau^u}f(X^u(t),u(t))\mathrm{d}t+\Psi(X^u(\tau^u))\right]
\]
is well-defined for all $u(\cdot)\in\mathcal{U}[0,T]$. We now state and prove the main existence theorem.
\begin{theorem}\label{thm:existence}
Let Assumptions \ref{assb-b2}, \ref{ass-fai} and \ref{ass-f-pai} hold. Then there exists an optimal control $\bar{u}(\cdot)\in\mathcal{U}[0,T]$ and an optimal pair $(\bar{u}(\cdot),\bar{X}(\cdot))$ such that
\[
J(\bar{u}(\cdot))=\inf_{u(\cdot)\in\mathcal{U}[0,T]}J(u(\cdot)).
\]
\end{theorem}

\begin{proof}
The proof proceeds in three steps: constructing a minimizing sequence, extracting a weakly convergent subsequence, and verifying that the weak limit is optimal.
Let $J^*=\inf_{u(\cdot)\in\mathcal{U}[0,T]}J(u(\cdot))$. Since $U$ is bounded (see Assumption \ref{ass-f-pai}), there exists a sequence $\{u_n(\cdot)\}_{n\geq1}\subset\mathcal{U}[0,T]$ such that
\[
\lim_{n\to\infty}J(u_n(\cdot))= J^*\quad \text{and} \quad \lim_{n\to\infty}\tau^{u_n}=\bar{\tau}.
\]

Since $U$ is bounded and convex, the sequence $\{u_n(\cdot)\}$ is bounded in $L^2_{\mathcal{F}}([0,T];U)$. Thus, there exists a subsequence (still denoted $\{u_n(\cdot)\}$) and a control $\bar{u}(\cdot)\in\mathcal{U}[0,T]$ such that
\[
u_n(\cdot)\rightharpoonup\bar{u}(\cdot)\quad\text{weakly in }L^2_{\mathcal{F}}([0,T];U).
\]
By Mazur's theorem, we can obtain a sequence
\[
\tilde{u}_j(\cdot):=\sum_{i\geq 1}a_{ij}u_{i+j}(\cdot),\ a_{ij}\geq 0,\ \sum_{i\geq 1}a_{ij}=1
\]
such that
\[
\tilde{u}_j(\cdot)\to \bar{u}(\cdot)\quad\text{strongly in }L^2_{\mathcal{F}}([0,T];U).
\]
By the continuity properties of controlled SDEs, the corresponding states satisfy
\[
X^{\tilde{u}_j}(\cdot)\to\bar{X}(\cdot)\quad\text{strongly in }L^2(\Omega;C([0,T];\mathbb{R}^m)),
\]
where $\bar{X}(\cdot)=X^{\bar{u}}(\cdot)$.

Note that $\mathbb{E}[Y^{{u}_j}(\cdot)]$ satisfies a linear equation, and $\lim_{j\to \infty}\tau^{u_j}=\bar{\tau}$, we have
$$
\mathbb{E}[Y^{\tilde{u}_j}(t)]=\sum_{i\geq 1}a_{ij}\mathbb{E}[Y^{{u}_{i+j}}(t)],
$$
and
$$
\lim_{j\to\infty}\tau^{\tilde{u}_j}=\tau^{\bar{u}}=\bar{\tau}.
$$
By the convexity of $f$ and $\Psi$, we obtain
\[
\begin{aligned}
J(\bar{u}(\cdot))&=\lim_{n\to\infty}J(\tilde{u}_j(\cdot))\\
&=\lim_{j\to\infty}\mathbb{E}\left[\int_0^{\tau^{\tilde{u}_j}}f(X^{\tilde{u}_j}(t),\tilde{u}_j(t))\mathrm{d}t
+\Psi(X^{\tilde{u}_j}(\tau^{\tilde{u}_j}))\right]\\
&\leq
\lim_{j\to\infty}\sum_{i\geq 1}a_{ij}\mathbb{E}\left[\int_0^{\tau^{\tilde{u}_j}}f(X^{{u}_{i+j}}(t),{u}_{i+j}(t))\mathrm{d}t
+\Psi(X^{{u}_{i+j}}(\tau^{\tilde{u}_j}))\right]\\
&=\lim_{j\to\infty}\sum_{i\geq 1}a_{ij}\mathbb{E}\left[\int_0^{\tau^{{u}_{i+j}}}f(X^{{u}_{i+j}}(t),{u}_{i+j}(t))\mathrm{d}t
+\Psi(X^{{u}_{i+j}}(\tau^{{u}_{i+j}}))\right]\\
&=J^*.
\end{aligned}
\]
Since $J^*$ is the minimum value of $\inf_{u(\cdot)\in\mathcal{U}[0,T]}J(u(\cdot))$, we conclude $J(\bar{u}(\cdot))=J^*$. Thus, $\bar{u}(\cdot)$ is an optimal control.
\end{proof}

\begin{remark}
The key novelties in this existence proof compared to classical fixed-horizon problems are: The continuity of the terminal time map $u(\cdot)\mapsto\tau^u$, which requires the mean-field target $\mathbb{E}[Y^u(t)]$ satisfying a linear equation; The existence theorem confirms that the optimal control problem is well-posed under the given conditions, providing a solid foundation for the subsequent analysis of necessary conditions and applications. Theorem \ref{thm:existence} confirms the existence of an optimal control for linear systems in Sections \ref{sec:time} and \ref{sec:app}.
\end{remark}

\section{Linear time-optimal control problem}\label{sec:time}
In this section, we consider a linear time-optimal control problem where the state $X^u(\cdot)$ satisfies a linear stochastic differential equation (\ref{e-line-ode_1}) and the cost functional is given by
\begin{equation}
J(u(\cdot))={\tau^{u}},\label{line-cost}%
\end{equation}
subject to the minimum time constraint
\begin{equation}
\label{line-time-1}
\tau^u=\inf\bigg{\{}t:\mathbb{E}[Y^u(t)]\leq 0,\ t\in [0,T] \bigg{\}}\wedge T,
\end{equation}
where $Y^u(t)$ satisfies equation (\ref{e-line-y-ode}).
Applying Theorem \ref{the-Max}, we have the following corollary.
\begin{corollary}\label{line-coro-1}
For the linear time-optimal control problem, the following holds:

 (i). If $\tau^{\bar{u}}<T$, then
\begin{equation}%
\begin{array}
[c]{ll}%
&-\displaystyle\frac{\hat{K}(t)(u-\bar{u}(t))}{G^{\bar{u}}
(\tau^{\bar{u}})}\leq 0,\\
\end{array}
\label{line-prin-2}%
\end{equation}
for any $u\in U$, $\text{a.e.}\ t \in[0,\tau^{\bar{u}}]$, $P-\text{a.s.}$

(ii). If $\bigg{\{}t:\mathbb{E}[Y^{\bar{u}}(t))]\leq 0,\ t\in [0,T] \bigg{\}}=\varnothing$, then
\begin{equation}%
\begin{array}
[c]{ll}%
&0\leq 0,\\
\end{array}
\end{equation}
for any $u\in U$, $\text{a.e.}\ t \in[0,\tau^{\bar{u}}]$, $P-\text{a.s.}$

(iii). If $\inf\bigg{\{}t:\mathbb{E}[Y^{\bar{u}}(t))]\leq 0,\ t\in [0,T] \bigg{\}}=T$, then
\begin{equation}\label{line-equa-smp-2}
\begin{array}
[c]{ll}%
&-\displaystyle\frac{\hat{K}(t)(u-\bar{u}(t))}{G^{\bar{u}}
(\tau^{\bar{u}})}\leq 0,\\
\end{array}
\end{equation}
or
\begin{equation}\label{line-equa-smp-3}
\begin{array}
[c]{ll}%
&0\leq 0,\\
\end{array}
\end{equation}
for any $u\in U$, $\text{a.e.}\ t \in[0,\tau^{\bar{u}}]$, $P-\text{a.s.}$
\end{corollary}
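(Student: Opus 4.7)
The plan is to derive Corollary \ref{line-coro-1} as a direct specialization of Theorem \ref{the-Max} to the linear time-optimal setting, where $f(\cdot) \equiv 1$ and $\Psi(\cdot) \equiv 0$. Since the corollary statement mirrors the three-case structure of the theorem, my strategy is to (a) solve the adjoint BSDE for this special data, (b) read off the resulting Hamiltonian derivative, and (c) substitute into each of the three inequalities of Theorem \ref{the-Max}.

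First I would solve the first-order adjoint equation (\ref{prin-1}) under the present data. Because $\Psi_x \equiv 0$, the terminal condition becomes $p(\tau^{\bar{u}}) = 0$, and because $f_x \equiv 0$, the driver reduces to the homogeneous linear form
\begin{equation*}
-\mathrm{d}p(t) = \Big[A^{\top} p(t) + \sum_{j=1}^d C_j^{\top} q^j(t)\Big]\mathrm{d}t - q(t)\mathrm{d}W(t), \quad p(\tau^{\bar{u}}) = 0.
\end{equation*}
By the standard existence and uniqueness theory for linear BSDEs, the unique adapted solution on $[0,\tau^{\bar{u}}]$ is $(p(\cdot), q(\cdot)) \equiv (0, 0)$. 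Substituting this into the Hamiltonian $H(x,u,p,q) = b(x,u)^{\top} p + \sum_j \sigma^j(x,u)^{\top} q^j - f(x,u)$ and differentiating in $u$, I obtain $H_u = B^{\top} p + \sum_j D_j^{\top} q^j - f_u \equiv 0$, since $f \equiv 1$ makes $f_u \equiv 0$ as well.

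Next, I would simplify the $\Psi$- and $f$-dependent prefactor appearing in the minimum-time correction. Since both $\Psi_x$ and $\Psi_{xx}$ vanish identically, $\tilde{\Psi}^{\bar{u}}(\tau^{\bar{u}}) \equiv 0$, while $f(\bar{X}(\tau^{\bar{u}}),\bar{u}(\tau^{\bar{u}})) = 1$. Hence $\mathbb{E}[\tilde{\Psi}^{\bar{u}}(\tau^{\bar{u}}) + f(\bar{X}(\tau^{\bar{u}}),\bar{u}(\tau^{\bar{u}}))] = 1$, so the scalar multiplying $\hat{K}(t)(u-\bar{u}(t))/G^{\bar{u}}(\tau^{\bar{u}})$ in (\ref{prin-2}) collapses to $1$.

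With these two reductions in hand, each case of Theorem \ref{the-Max} yields its counterpart in Corollary \ref{line-coro-1} by pure substitution: in case (i), (\ref{prin-2}) becomes $-\hat{K}(t)(u-\bar{u}(t))/G^{\bar{u}}(\tau^{\bar{u}}) \leq 0$, which is (\ref{line-prin-2}); in case (ii), the $H_u$-inequality trivially becomes $0 \leq 0$; and in case (iii), the two alternatives (\ref{equa-smp-2}) and (\ref{equa-smp-3}) specialize to (\ref{line-equa-smp-2}) and (\ref{line-equa-smp-3}) respectively. No genuine obstacle is expected here; the only point that warrants explicit mention is the vanishing of $(p,q)$, since this is what kills the classical $H_u$ term and isolates the time-optimal correction $\hat{K}(t)$ as the sole surviving ingredient of the necessary condition.
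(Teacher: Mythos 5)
Your proposal is correct and follows essentially the same route as the paper: specialize Theorem \ref{the-Max}, observe that the adjoint BSDE becomes homogeneous with zero terminal condition (since $f_x\equiv 0$, $\Psi_x\equiv 0$) so that $(p,q)\equiv(0,0)$ and the $H_u$ term vanishes, and then read off the three cases. Your explicit remark that $\mathbb{E}[\tilde{\Psi}^{\bar{u}}(\tau^{\bar{u}})+f(\bar{X}(\tau^{\bar{u}}),\bar{u}(\tau^{\bar{u}}))]=1$ only makes transparent a simplification the paper leaves implicit.
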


\begin{remark}
In Corollary \ref{line-coro-1}, we derive the stochastic maximum principle for the linear time-optimal control problem under three cases for $\tau^{\bar{u}}$. Case (i) yields new results. Case (ii) (empty set condition) give the trivial inequality $0\leq 0$ providing no meaningful information.
Case (iii) combines the previous cases. We therefore focus on case (i) in the subsequent analysis.
\end{remark}

In light of Corollary \ref{line-coro-1}, we focus on case (i) with $\tau^{\bar{u}} < T$. Assume $Y^u(t)$ satisfies
\begin{equation}\label{line-y-ode}
\left\{
\begin{aligned}
\text{d}{Y}^u(t)=&\big{[}E^{\top}_1\mathbb{E}[X^u(t)]+E^{\top}_2X^u(t)
+E^{\top}_3\mathbb{E}[AX^u(t)+Bu(t)]+E^{\top}_4u(t)\big{]}\text{d}t\\
&+g(\mathbb{E}[X^u(t)],X^u(t),u(t))\text{d}W(t)\\
Y^u(0)=&y_0,
\end{aligned}
\right.
\end{equation}
where $E^{\top}_1,E^{\top}_2,E^{\top}_3,E^{\top}_4$ are constant vectors.
\begin{theorem}\label{the-time-1}
For the linear time-optimal control problem, let $\tau^{\bar{u}}<T$, we have
\begin{equation}%
\begin{array}
[c]{ll}%
&\displaystyle \frac{{[}-p_0(t)^{\top}B+E^{\top}_3B+E^{\top}_4{]}(u-\bar{u}(t))}{(E^{\top}_1+E^{\top}_2+E^{\top}_3A)\mathbb{E}[\bar{X}(\tau^{\bar{u}})]
+(E^{\top}_3B+E^{\top}_4)\mathbb{E}[\bar{u}(\tau^{\bar{u}})]}\leq 0,\\
\end{array}
\label{line-prin-2}%
\end{equation}
for any $u\in U$, $\text{a.e.}\ t \in[0,\tau^{\bar{u}}]$, which is a necessary condition for a deterministic optimal control, where $p_0(t)$ satisfies the ordinary differential equation
\begin{equation*}
\begin{array}
[c]{rl}%
\text{d}{p}_0(t)^{\top}= & \bigg{[}-p_0(t)^{\top}A+E^{\top}_1+E^{\top}_2
                   +E^{\top}_3A\bigg{]}\text{d}t\\
p_0(\tau^{\bar{u}})^{\top}= &0,
\end{array}
\end{equation*}
and
$$
p_0(t)=-\int_t^{\tau^{\bar{u}}}(E^{\top}_1+E^{\top}_2+E^{\top}_3A)e^{A(s-t)}\text{d}s.
$$

Furthermore, if the control set $U$ is a bounded closed rectangle in $\mathbb{R}^k$, i.e.,
\[
U = [a_1, b_1] \times [a_2, b_2] \times \dots \times [a_k, b_k], \quad a_i \leq b_i,
\]
and
$$
(E^{\top}_1+E^{\top}_2+E^{\top}_3A)\mathbb{E}[\bar{X}(\tau^{\bar{u}})]
+(E^{\top}_3B+E^{\top}_4)\mathbb{E}[\bar{u}(\tau^{\bar{u}})]\neq 0,
$$
then the necessary condition (5.12) implies that the optimal control $\bar{u}(t) = (\bar{u}_1(t), \dots, \bar{u}_k(t))$ is of \emph{bang-bang} type for each component, except possibly on a set of singular times.
\end{theorem}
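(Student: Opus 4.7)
The plan is to apply Corollary \ref{line-coro-1}(i) and substitute all of its ingredients explicitly under the linear dynamics, then use the resulting linear dependence on $u$ to extract the bang-bang structure. The proof reduces to three explicit computations---solving the adjoint system, evaluating $\hat K(t)$, and computing $G^{\bar u}(\tau^{\bar u})$---followed by a standard optimization-on-a-rectangle argument.

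First I would specialize the first-order adjoint equation introduced before Lemma \ref{le-dual} to the linear setting. With $b_x\equiv A$, $\sigma_x^j\equiv C_j$, and with the (deterministic) derivatives $\bar h_{x_1}=E_1^{\top}$, $\bar h_{x_2}=E_2^{\top}$, $\bar h_{x_3}=E_3^{\top}$, $\bar h_{x_4}=E_4^{\top}$, the driver of the backward SDE for $(p_0,q_0)$ is affine in $(p_0,q_0)$ with a deterministic inhomogeneity, and the terminal condition $p_0(\tau^{\bar u})=0$ is deterministic. By uniqueness for linear BSDEs with deterministic data, $q_0\equiv 0$ and $p_0$ is deterministic; transposing the resulting linear ODE yields $\mathrm{d}p_0(t)^{\top}/\mathrm{d}t=-p_0(t)^{\top}A+E_1+E_2+E_3A$ with $p_0(\tau^{\bar u})=0$, and the integrating factor $e^{tA}$ produces the claimed closed form $p_0(t)^{\top}=-\int_t^{\tau^{\bar u}}(E_1+E_2+E_3A)e^{A(s-t)}\,\mathrm{d}s$.

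Next I would evaluate $\hat K(t)$ from Lemma \ref{le-dual} with $b_u=B$, $\sigma_u^j=D_j$, and $q_0\equiv 0$: since $E_3$ is constant, the cross term $\bar h_{x_3}^{\top}\mathbb{E}[b_u]-\mathbb{E}[\bar h_{x_3}^{\top}]b_u$ vanishes, and a direct substitution collapses $\hat K(t)$ to $p_0(t)^{\top}B-E_3B-E_4$. A direct computation of $G^{\bar u}(t)=\mathbb{E}[h(\mathbb{E}[\bar X],\bar X,\mathbb{E}[A\bar X+B\bar u],\bar u)]$ gives $G^{\bar u}(\tau^{\bar u})=(E_1+E_2+E_3A)\mathbb{E}[\bar X(\tau^{\bar u})]+(E_3B+E_4)\mathbb{E}[\bar u(\tau^{\bar u})]$. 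Plugging both quantities into the inequality $-\hat K(t)(u-\bar u(t))/G^{\bar u}(\tau^{\bar u})\le 0$ supplied by Corollary \ref{line-coro-1}(i) produces precisely the necessary condition (5.12).

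For the bang-bang conclusion, set $\theta(t)^{\top}=-p_0(t)^{\top}B+E_3B+E_4\in\mathbb{R}^{1\times k}$; the necessary condition reads $[\theta(t)^{\top}(u-\bar u(t))]/G^{\bar u}(\tau^{\bar u})\le 0$ for every $u\in U$. Since $G^{\bar u}(\tau^{\bar u})$ is a fixed nonzero scalar, this fixes a consistent sign for $\theta(t)^{\top}(u-\bar u(t))$ over the rectangle $U=\prod_i[a_i,b_i]$, and because $u\mapsto\theta(t)^{\top}u$ is linear and separable in its components, the extremum on $U$ is attained componentwise at a vertex, forcing $\bar u_i(t)\in\{a_i,b_i\}$ whenever $\theta_i(t)\neq 0$; the set $\{t:\theta_i(t)=0\}$ is the singular-time set mentioned in the statement. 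The main delicacy is the BSDE uniqueness step that forces $q_0\equiv 0$ and turns the adjoint into the purely deterministic ODE, since without this collapse neither the explicit formula for $p_0$ nor the vertex-optimization argument yielding bang-bang would go through.
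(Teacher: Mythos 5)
Your proposal is correct and follows essentially the same route as the paper: specialize the adjoint BSDE so that deterministic data force $q_0\equiv 0$ and a deterministic $p_0$ solving the stated ODE, compute $\hat K(t)=p_0(t)^{\top}B+\sum_j q_0^j(t)^{\top}D_j-E_3B-E_4$ (the mean-field cross term vanishing since $E_3$, $B$ are constant) and $G^{\bar u}(\tau^{\bar u})$, plug into Corollary \ref{line-coro-1}(i), and then optimize the linear, separable map $u\mapsto \theta(t)^{\top}u$ componentwise over the rectangle to land on vertices. The only difference is that the paper additionally notes that each component of the switching function is real analytic (an integral of matrix exponentials), hence has finitely many zeros unless identically zero, so the number of switches is finite; you leave the singular set $\{t:\theta_i(t)=0\}$ unquantified, which the statement's wording (``except possibly on a set of singular times'') still accommodates.
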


\begin{proof}
Note that
\begin{equation*}
K(x,x',b(x,u),u,p_0,q_0)=(Ax+Bu)^{\top}p_0+\sum_{j=1}^d(C_jx+D_ju)^{\top}q_0^j-h(x',x,b(x,u),u),
\end{equation*}
so
\begin{equation*}
\begin{array}
[c]{rl}%
\hat{K}(t)=&{K}_u(\bar{X}{(t)},\mathbb{E}[\bar{X}{(t)}],\mathbb{E}[b(\bar{X}{(t)},\bar{u}(t))],\bar{u}(t),
p_0(t),q_0(t))\\
&+\bar{h}_{x_3}(t)^{\top}
\mathbb{E}[b_u(\bar{X}(t),\bar{u}(t))]-\mathbb{E}[\bar{h}_{x_3}(t)^{\top}]
b_u(\bar{X}(t),\bar{u}(t))\\
=&p_0(t)^{\top}B+\sum_{j=1}^dq_0^j(t)^{\top}D_j-E_3B-E_4,
\end{array}
\end{equation*}
where $(p_0(t),q_0(t))$ satisfies
\begin{equation*}
\begin{array}
[c]{rl}%
-\text{d}{p}_0(t)= & \bigg{[}A^{\top}p_0(t)+ \displaystyle\sum_{j=1}^dC^{\top}_{j}q_0^j(t)
                   -E^{\top}_2-E^{\top}_1
                   -A^{\top}E^{\top}_3\bigg{]}\text{d}t\\
                   &-q_0(t)\text{d}W(t),\ t\in[0,\tau^{\bar{u}}),\\
p_0(\tau^{\bar{u}})= &0.
\end{array}
\end{equation*}
Thus, $q_0(t)=0$, and $p_0(t)$ satisfies
\begin{equation*}
\begin{array}
[c]{rl}%
\text{d}{p}_0(t)^{\top}= & \bigg{[}-p_0(t)^{\top}A+E^{\top}_1+E^{\top}_2
                   +E^{\top}_3A\bigg{]}\text{d}t\\
p_0(\tau^{\bar{u}})^{\top}= &0,
\end{array}
\end{equation*}
and
$$
-\hat{K}(t)=-p_0(t)^{\top}B+E^{\top}_3B+E^{\top}_4.
$$
Furthermore, we have
$$
G^{\bar{u}}(\tau^{\bar{u}})=(E^{\top}_1+E^{\top}_2+E^{\top}_3A)\mathbb{E}[\bar{X}(\tau^{\bar{u}})]
+(E^{\top}_3B+E^{\top}_4)\mathbb{E}[\bar{u}(\tau^{\bar{u}})],
$$
so
$$
-\displaystyle\frac{\hat{K}(t)(u-\bar{u}(t))}{G^{\bar{u}}(\tau^{\bar{u}})}
=\frac{{[}-p_0(t)^{\top}B+E^{\top}_3B+E^{\top}_4{]}(u-\bar{u}(t))}{(E^{\top}_1+E^{\top}_2
+E^{\top}_3A)\mathbb{E}[\bar{X}(\tau^{\bar{u}})]
+(E^{\top}_3B+E^{\top}_4)\mathbb{E}[\bar{u}(\tau^{\bar{u}})]}.
$$

Define the switching function row vector $S(t) \in \mathbb{R}^{1 \times k}$ by
\[
S(t) = \frac{-p_0(t)^\top B + E^{\top}_3 B + E^{\top}_4}
{(E^{\top}_1+E^{\top}_2+E^{\top}_3A)\mathbb{E}[\bar{X}(\tau^{\bar{u}})]
+(E^{\top}_3B+E^{\top}_4)\mathbb{E}[\bar{u}(\tau^{\bar{u}})]}.
\]
Then for each $i = 1, \dots, k$,
\[
\bar{u}_i(t) =
\begin{cases}
a_i, & \text{if } S_i(t) < 0, \\
b_i, & \text{if } S_i(t) > 0.
\end{cases}
\]

The function $p_0(t)$ is given explicitly by
\[
p_0(t) = -\int_t^{\tau} (E^{\top}_1 + E^{\top}_2 + E^{\top}_3 A) e^{A(s-t)}  ds,
\]
so each component $S_i(t)$ is a real analytic function of $t$ on $[0, \tau^{\bar{u}}]$, being a linear combination of integrals of matrix exponentials. Therefore, each $S_i(t)$ has at most finitely many zeros in $[0, \tau^{\bar{u}}]$ unless it is identically zero. Consequently, each control component $\bar{u}_i(t)$ switches between its minimum and maximum values a finite number of times, confirming the bang-bang property.
\end{proof}

\begin{example}
To verify Theorem \ref{the-time-1}, let $\tau^{\bar{u}}<T$, $x_0=0$, $m=d=1$, $A=a=B=b,C=c,D=d,E_1=e_1,E_2=e_2,E_3=e_3,E_4=e_4$, and $U=[u_{min},u_{max}]$, where $0<u_{min}$. By Theorem \ref{the-time-1}, we have
\begin{equation}\label{exam_equ_1}
\displaystyle \frac{(-p_0(t)b+e_3b+e_4)(u-\bar{u}(t))}{(e_1+e_2+e_3a)\mathbb{E}[\bar{X}(\tau^{\bar{u}})]
+(e_3b+e_4)\mathbb{E}[\bar{u}(\tau^{\bar{u}})]}\leq 0,
\end{equation}
and
$$
p_0(t)=\frac{1}{a}(e_1+e_2+e_3a)(1-e^{a(\tau^{\bar{u}}-t)}).
$$
The optimal state process $\bar{X}(\cdot)$ satisfies a linear equation, so
$$
\mathbb{E}[\bar{X}(\tau^{\bar{u}})]=\int_0^{\tau^{\bar{u}}}ae^{a(\tau^{\bar{u}}-s)}
\mathbb{E}[\bar{u}(s)]\text{d}s.
$$

Thus, the necessary condition (\ref{exam_equ_1}) becomes
\begin{equation}\label{exam_equ_2}
\displaystyle \frac{((e_1+e_2+e_3a)(e^{a(\tau^{\bar{u}}-t)}-1)+e_3b+e_4)(u-\bar{u}(t))}{(e_1+e_2+e_3a)\mathbb{E}[\bar{X}(\tau^{\bar{u}})]
+(e_3b+e_4)\mathbb{E}[\bar{u}(\tau^{\bar{u}})]}\leq 0, \quad u\in [u_{min},u_{max}].
\end{equation}

Now, we consider different value of parameters $a$, $(e_1+e_2+e_3a)$ and $(e_3b+e_4)$:

$(i)$. For $a>0$, $(e_1+e_2+e_3a)>0$, $(e_3b+e_4)>0$ or $a>0$, $(e_1+e_2+e_3a)<0$, $(e_3b+e_4)<0$, it follows that
$$
\frac{(e_1+e_2+e_3a)(e^{a(\tau^{\bar{u}}-t)}-1)+e_3b+e_4}{(e_1+e_2+e_3a)\mathbb{E}[\bar{X}(\tau^{\bar{u}})]
+(e_3b+e_4)\mathbb{E}[\bar{u}(\tau^{\bar{u}})]}>0,
$$
which implies
$$
u-\bar{u}(t)\leq 0,\quad u\in [u_{min},u_{max}].
$$
Thus $\bar{u}(t)=u_{max},\ t\in [0,\tau^{\bar{u}}]$.

$(ii)$. For $a>0$,
$$(e_1+e_2+e_3a)(e^{a\tau^{\bar{u}}}-1)u_{min}+(e_3b+e_4)u_{max}>0,$$
and $(e_3b+e_4)<0$, we have
$$
(e_1+e_2+e_3a)\mathbb{E}[\bar{X}(\tau^{\bar{u}})]
+(e_3b+e_4)\mathbb{E}[\bar{u}(\tau^{\bar{u}})]>0
$$
and there exists $t_0\in (0, \tau^{\bar{u}})$ such that
$$
(e_1+e_2+e_3a)(e^{a(\tau^{\bar{u}}-t_0)}-1)+e_3b+e_4=0.
$$
Combining these results, for $t\in [0,t_0)$, we have
$$
\frac{(e_1+e_2+e_3a)(e^{a(\tau^{\bar{u}}-t)}-1)+e_3b+e_4}{(e_1+e_2+e_3a)\mathbb{E}[\bar{X}(\tau^{\bar{u}})]
+(e_3b+e_4)\mathbb{E}[\bar{u}(\tau^{\bar{u}})]}>0,
$$
so $\bar{u}(t)=u_{max},\ t\in [0,t_0)$. For $t\in (t_0,\tau^{\bar{u}}]$, we have
$$
\frac{(e_1+e_2+e_3a)(e^{a(\tau^{\bar{u}}-t)}-1)+e_3b+e_4}{(e_1+e_2+e_3a)\mathbb{E}[\bar{X}(\tau^{\bar{u}})]
+(e_3b+e_4)\mathbb{E}[\bar{u}(\tau^{\bar{u}})]}<0,
$$
and $\bar{u}(t)=u_{min},\ t\in (t_0,\tau^{\bar{u}}]$.

\end{example}

\section{Application}\label{sec:app}
In this section, we apply the proposed framework to portfolio optimization with a target return and a risk constraint. We demonstrate how the unified framework naturally incorporates the dual objectives of minimizing both time and cost in such problems.

Consider an investor who aims to maximize the expected portfolio return while minimizing the time to reach a target wealth level \( \alpha^* \), which is a constant. Let \( X^u(\cdot) \) denote the portfolio value, and \( u(\cdot) \) the allocation to a risky asset, taking values in \( [\alpha^*, 1.25\alpha^*] \). The risk-free asset yields a return \( r \), and the stock price follows a geometric Brownian motion.

The state dynamics are given by
\[
\mathrm{d}X^u(t) = [r X^u(t) + (\mu - r) u(t)] \mathrm{d}t + \sigma u(t) \mathrm{d}W(t), \quad X^u(0) = x_0.
\]
The target process \( Y^u(t) \) is defined as the shortfall below the target,
\[
Y^u(t) = \alpha^* - X^u(t),
\]
and satisfies
\[
\mathrm{d}Y^u(t) = -[r X^u(t) + (\mu - r) u(t)] \mathrm{d}t - \sigma u(t) \mathrm{d}W(t), \quad Y^u(0) = \alpha^* - x_0.
\]

We define the terminal time as
\[
\tau^u = \inf \left\{ t : \mathbb{E}[Y^u(t)] \leq 0 \right\} \land T.
\]
The cost functional includes a quadratic control cost (e.g., transaction costs) prior to the terminal time \( \tau^u \):
\[
J(u(\cdot)) = \mathbb{E} \left[ \int_0^{\tau^u} (1+\frac{\lambda}{2} u(t)^2) \, \mathrm{d}t \right],
\]
where \( \lambda > 0 \).

Using the stochastic maximum principle from the unified framework, we have
\[
H(x,u,p,q) = [rx + (\mu - r) u] p + \sigma u q -1-\frac{\lambda}{2} u^2,
\]
\[
K(x,u,p_0,q_0) = [rx + (\mu - r) u] p_0 + \sigma u q_0 + rx + (\mu - r) u,
\]
from which it follows that
\[
H_u(x,u,p,q) = (\mu - r)p + \sigma q - \lambda u,
\]
\[
K_u(x,u,p_0,q_0) = (\mu - r)p_0 + \sigma q_0 + (\mu - r),
\]
\[
\hat{K} = (\mu - r)p_0 + \sigma q_0 + (\mu - r).
\]

The adjoint equations are
\[
\begin{cases}
-\mathrm{d}{p}_0(t) = \left[ r p_0(t) + r \right] \mathrm{d}t - q_0(t) \mathrm{d}W(t), \\
p_0(\tau^{\bar{u}}) = 0,
\end{cases}
\]
and
\[
\begin{cases}
-\mathrm{d}p(t) = r p(t) \mathrm{d}t - q(t) \mathrm{d}W(t), \\
p(\tau^{\bar{u}}) = 0.
\end{cases}
\]
The solutions are
\[
(p_0(t), q_0(t)) = \left( e^{r(\tau^{\bar{u}} - t)} - 1, 0 \right), \quad (p(t), q(t)) = (0, 0).
\]
Thus,
\[
H_u(\bar{X}(t), \bar{u}(t), p(t), q(t)) = -\lambda \bar{u},
\]
\[
\hat{K}(t) = (\mu - r) e^{r(\tau^{\bar{u}} - t)}.
\]

We now apply Theorem \ref{the-Max} to determine the optimal control. Note that
\[
G^{\bar{u}}(\tau^{\bar{u}}) = -\left[ r \mathbb{E}[\bar{X}(\tau^{\bar{u}})] + (\mu - r) \mathbb{E}[\bar{u}(\tau^{\bar{u}})] \right].
\]
Assuming \( \tau^{\bar{u}} < T \) (where \( T \) is sufficiently large), condition (i) of Theorem \ref{the-Max} gives
\begin{equation}
\begin{aligned}
-\lambda \bar{u}(t)(u - \bar{u}(t)) + \frac{
\mathbb{E}\left[1+\frac{\lambda}{2} \bar{u}(\tau^{\bar{u}})^2 \right] (\mu - r) e^{r(\tau^{\bar{u}} - t)}(u - \bar{u}(t))}
{r \mathbb{E}[\bar{X}(\tau^{\bar{u}})] + (\mu - r) \mathbb{E}[\bar{u}(\tau^{\bar{u}})]} \leq 0.
\end{aligned}
\end{equation}
Hence,
\[
2 \bar{u}(t)(u - \bar{u}(t)) - \frac{
\mathbb{E}[\frac{2}{\lambda}+\bar{u}(\tau^{\bar{u}})^2] (\mu - r) e^{r(\tau^{\bar{u}} - t)}(u - \bar{u}(t))}
{r \mathbb{E}[\bar{X}(\tau^{\bar{u}})] + (\mu - r) \mathbb{E}[\bar{u}(\tau^{\bar{u}})]} \geq 0,
\]
and thus
\begin{equation}\label{exam1_equ0}
\left[ 2 \bar{u}(t) - \frac{
\mathbb{E}[\frac{2}{\lambda}+\bar{u}(\tau^{\bar{u}})^2] (\mu - r) e^{r(\tau^{\bar{u}} - t)}}
{r \mathbb{E}[\bar{X}(\tau^{\bar{u}})] + (\mu - r) \mathbb{E}[\bar{u}(\tau^{\bar{u}})]} \right] (u - \bar{u}(t)) \geq 0.
\end{equation}
This implies a deterministic control \( \bar{u}(\cdot) \), where \( \tau^{\bar{u}} \) satisfies
\[
\mathbb{E}[\bar{X}(\tau^{\bar{u}})] = \alpha^*, \quad \mathbb{E}[\bar{X}(t)] < \alpha^*, \quad t < \tau^{\bar{u}}.
\]

From inequality (\ref{exam1_equ0}), for any \( u \in [\alpha^*, 1.25\alpha^*] \) and \( t \in [0, \tau^{\bar{u}}] \), we have
\[
I_1 \leq 0, \ I_2 \leq 0 \quad \text{or} \quad I_1 \geq 0, \ I_2 \geq 0,
\]
where
\[
\begin{cases}
\displaystyle I_1 = \left[ 2 \bar{u}(t) - \frac{
\mathbb{E}[\frac{2}{\lambda}+\bar{u}(\tau^{\bar{u}})^2] (\mu - r) e^{r(\tau^{\bar{u}} - t)}}
{r \alpha^* + (\mu - r) \mathbb{E}[\bar{u}(\tau^{\bar{u}})]} \right], \\
I_2 = (u - \bar{u}(t)).
\end{cases}
\]
Note that the second term in \( I_1 \) is a decreasing function of \( t \). Let $\lambda=\frac{1}{\beta{\alpha^*}^2},\ \beta>0$. The optimal control \( \bar{u}(\cdot) \) takes the form
\begin{equation}\label{exam1_optm}
\begin{cases}
\displaystyle
\bar{u}(t) = 1.25\alpha^*, & t \in [0, t_1], \\
\displaystyle \bar{u}(t) = \frac{(2\beta+1)(\mu - r) e^{r(\tau^{\bar{u}} - t)}}{2\mu} \alpha^*, & t \in (t_1, t_2], \\
\bar{u}(t) = \alpha^*, & t \in (t_2, \tau^{\bar{u}}],
\end{cases}
\end{equation}
where $0\leq t_1\leq t_2\leq \tau^{\bar{u}}$, with \( t_1 \) and \( t_2 \) given by
\[
t_1 = \tau^{\bar{u}} - \frac{1}{r} \ln \frac{2.5\mu}{(2\beta+1)(\mu - r)}, \quad
t_2 = \tau^{\bar{u}} - \frac{1}{r} \ln \frac{2\mu}{(2\beta+1)(\mu - r)}.
\]
From \( \mathbb{E}[\bar{X}(\tau^{\bar{u}})] = \alpha^* \), \( \tau^{\bar{u}} \) satisfies
\[
\alpha^* = x_0 e^{r \tau^{\bar{u}}} + \int_0^{\tau^{\bar{u}}} (\mu - r) \bar{u}(t) e^{r(\tau^{\bar{u}} - s)} \, \mathrm{d}s.
\]

To obtain the solution, we combine the expressions for \( \bar{u}(\cdot), t_1, t_2, \) and \( \tau^{\bar{u}} \). Substituting the optimal control \( \bar{u}(t) \) into the terminal wealth condition yields
\begin{equation*}
\begin{array}
[c]{rl}%
\alpha^*=& \displaystyle x_0 e^{r\tau^{\bar{u}}} + (\mu - r)\alpha^* \bigg[ \int_0^{t_1} 1.25 e^{r(\tau^{\bar{u}}-t)} \mathrm{d}t
+ \int_{t_1}^{t_2} \frac{(2\beta+1)(\mu - r) e^{2r(\tau^{\bar{u}}-t)}}{2\mu}\mathrm{d}t\\
&\displaystyle \qquad \qquad\qquad\qquad+ \int_{t_2}^{\tau^{\bar{u}}} e^{r(\tau^{\bar{u}} - t)} \mathrm{d}t \bigg].
\end{array}
\end{equation*}
Let \( \tau^{\bar{u}} \) be the unknown terminal time. The switching times $t_1,t_2$ are
\begin{align*}
t_1 &= \tau^{\bar{u}} - \frac{1}{r} \ln \frac{2.5\mu}{(2\beta+1)(\mu - r)}, \\
t_2 &= \tau^{\bar{u}} - \frac{1}{r} \ln \frac{2\mu}{(2\beta+1)(\mu - r)}.
\end{align*}
Computing the integrals explicitly, we obtain
\begin{equation}\label{exam1_solv}
\alpha^* = x_0 e^{r\tau^{\bar{u}}} + (\mu - r) \alpha^* \left( L_1 + L_2 + L_3 \right),
\end{equation}
where
\[
\begin{cases}
\displaystyle L_1 = 1.25 \frac{e^{r\tau^{\bar{u}}} - e^{r(\tau^{\bar{u}} - t_1)}}{r}, \\
\displaystyle L_2 = \frac{(2\beta+1)(\mu - r)}{4\mu r} \left( e^{2r(\tau^{\bar{u}} - t_1)} - e^{2r(\tau^{\bar{u}} - t_2)} \right), \\
\displaystyle L_3 = \frac{e^{r(\tau^{\bar{u}} - t_2)}-1}{r}.
\end{cases}
\]

We solve the nonlinear equation (\ref{exam1_solv}) numerically using the parameter values,
\[
r = 0.05, \quad \mu = 0.10, \quad \beta=\frac{\mu}{\mu-r}-0.8, \quad \alpha^* = 10, \quad x_0 = 1.
\]
The solution is
\[
\tau^{\bar{u}} = 10.92, \quad t_1 = 3.21, \quad t_2 = 7.67,
\]
and the optimal control is shown in Figure \ref{fig_1}. The optimal control is given by
\[
\begin{cases}
\displaystyle
\bar{u}(t) = 12.5, & t \in [0, 3.21], \\
\displaystyle \bar{u}(t) = 8.5 e^{0.05(10.92 - t)}, & t \in (3.21, 7.67], \\
\bar{u}(t) = 10, & t \in (7.67, 10.92].
\end{cases}
\]

\begin{figure}[H]
	\centering
	\includegraphics[width=0.75\textwidth]{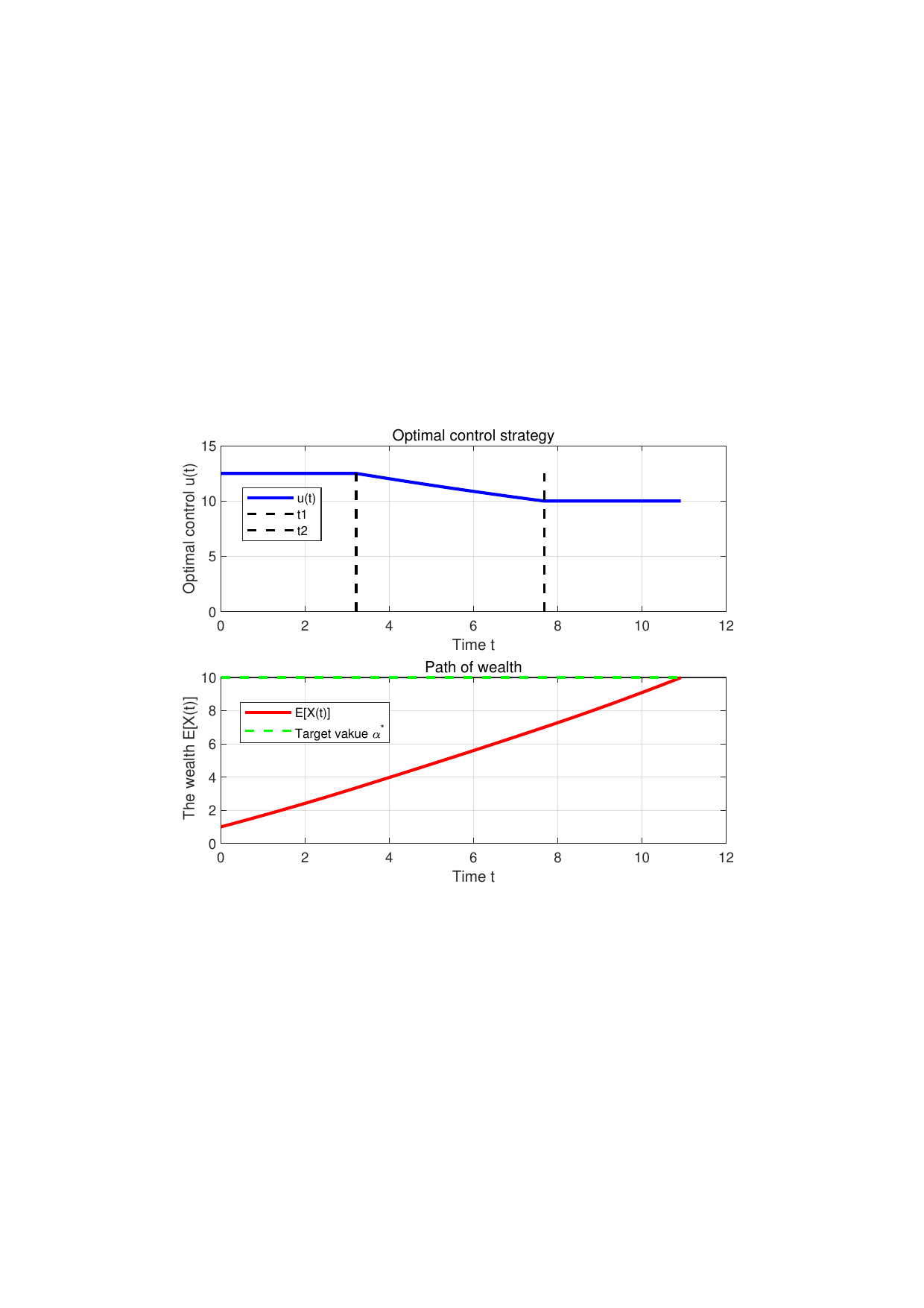}
	\caption{Optimal control and optimal wealth}
	\label{fig_1}
\end{figure}

Simulations indicate that the optimal strategy promotes rapid growth initially and reduces risk as the target is approached. Furthermore, it is straightforward to verify that the optimal control would be constant if only time-optimal or classical control structures were considered. This example illustrates how the unified stochastic optimal control framework can be applied to real-world financial problems.

\section{Conclusion}\label{sec:con}
In this study, we investigate a novel stochastic optimal control structure that unified time-optimal control problems and classical stochastic optimal control problems within a single framework. In this structure, the terminal time $\tau^u$ is a varying deterministic functional of control $u(\cdot)$, and $Y^u(\cdot)$ describes the performance or characteristics of an observable target. This allow us to simultaneously balance minimizing the varying terminal time $\tau^u$ and the cost functional $J(u(\cdot))$.

This paper presents a detailed framework for unified stochastic optimal control and rigorously establishes the corresponding stochastic maximum principle. Subsequently, we delve into the time-optimal control problem within this novel framework. Future work should aim to establish general sufficient conditions for optimality to complement the necessary conditions provided by the maximum principle. Additionally, developing computationally efficient numerical methods is crucial for solving the complex coupled forward-backward stochastic differential equations that arise from this framework. Further research could also explore extending the dynamic programming principle to this unified control setting and investigating its applications in areas such as mathematical finance and engineering systems.

\section*{Acknowledgements}
Funding: This work was supported by the National Natural Science Foundation of China (Grant No.12471450) and Taishan Scholar Talent Project Youth Project. Conflict of Interest: The authors declare that they have no conflict of interest.

\end{document}